\documentclass[12pt]{amsart}\pagestyle{myheadings}
\usepackage{format}

\makeatletter
\newcommand*\bigcdot{\mathpalette\bigcdot@{.7}}
\newcommand*\bigcdot@[2]{\mathbin{\vcenter{\hbox{\scalebox{#2}{$\m@th#1\bullet$}}}}}
\makeatother

\newcommand{\adjo}{\mathrel{\vcenter{\offinterlineskip \ialign{##\cr$\rightarrow$\cr\noalign{\kern 0pt}$\leftarrow$\cr}}}}

\newcommand{\mf}[1]{{\mathfrak{#1}}}
\newcommand{\mb}[1]{{\mathbb{#1}}}
\newcommand{\mc}[1]{{\mathcal{#1}}}
\newcommand{\mrm}[1]{{\mathrm{#1}}}

\DeclareMathOperator{\mhm}{MHM}

\DeclareMathOperator{\Gr}{Gr}

\definecolor{unitary}{rgb}{0, 0.6, 0.2}


\title{The FPP Conjecture for Real Reductive Groups}

\author{Dougal Davis}

\author{Lucas Mason-Brown}

\begin{document}

\subjclass{17B08, 22E46, 14F10, 32S35}
\keywords{Mixed Hodge modules, real reductive groups, unitary representations, Langlands classification}

\begin{abstract}
In this paper, we prove the FPP conjecture, giving a strong upper bound on the unitary dual of a real reductive group. Our proof is an application of the global generation properties of $\mathcal{D}$-modules on the flag variety and their Hodge filtrations.
\end{abstract}

\maketitle

\tableofcontents

\section{Introduction}

Let $G(k)$ be the $k$-points of a connected reductive algebraic group $G$ defined over a local field $k$. Let $\Pi_a(G(k))$ (resp. $\Pi_u(G(k))$, $\Pi_t(G(k))$) denote the set of equivalence classes of irreducible admissible (resp. unitary, tempered) complex representations of $G(k)$. Then there are inclusions
$$\Pi_t(G(k)) \subset \Pi_u(G(k)) \subset \Pi_a(G(k))$$
The Langlands classification (\cite{Langlands1989},\cite{Silberger1978}) reduces the classification of $\Pi_a(G(k))$ to the classification of $\Pi_t(M(k))$ for $k$-Levi subgroups $M$ of $G$. More precisely, fix a minimal $k$-parabolic subgroup $P_0 =M_0N_0 \subset G$. For any $k$-Levi subgroup $M$ of $G$, let $\mb{X}^*(M)_k$ denote the abelian group of algebraic characters $M \to \mathbb{G}_m$ defined over $k$, and let $\fa_M^* = \mb{X}^*(M)_k \otimes_{\ZZ} \RR$. Then $\fa_M^*$ can be identified with the set of continuous characters $M(k) \to \RR_{>0}$. The Langlands classification is a bijection
$$\Pi_a(G(k)) \leftrightarrow \{(P, \sigma, \nu)\}$$
where $P=MN$ is a $k$-parabolic subgroup containing $P_0=M_0N_0$, $\sigma$ is an irreducible tempered representation of $M(k)$ (considered up to equivalence), and $\nu$ is an element of $\fa_M^*$ satisfying a certain positivity condition with respect to $N$. This bijection takes a triple $(P,\sigma,\nu)$ to the unique irreducible quotient $J(P,\sigma,\nu)$ of the induced representation $\Ind^{G(k)}_{P(k)} (\sigma \otimes \nu)$.  

There is a general philosophy that unitary representations should be `small' deformations of tempered representations. In terms of the Langlands classification, this means:

\begin{philosophy*}
Fix a pair $(P,\sigma)$ as above and let
$$U(P,\sigma) := \{\nu \in \fa_M^* \mid J(P,\sigma,\nu) \text{ is unitary}\}.$$
Then $U(P,\sigma)$ is a `small' subset of $\fa_M^*$ (in a sense which depends on $P$ and $\sigma$). 
\end{philosophy*}

There are several precise theorems and conjectures of this form. The \emph{Dirac inequality} states that $U(P,\sigma)$ is contained in a small closed ball (with radius depending on $(P,\sigma)$). For real reductive groups, the Dirac inequality was proved by Parthasarathy in \cite{Parthasarathy} using harmonic analysis on symmetric spaces. In \cite{HuangPandzic}, Huang and Pandzic proved a slightly sharper bound of a similar nature using purely algebraic methods. An analogous result was obtained for $p$-adic groups by Barbasch, Ciubotaru, and Trapa (\cite{BCTDirac}). 

Since the unitarity of $J(P,\sigma,\nu)$ can only change at reducibility hyperplanes, it is clear that the region $U(P,\sigma)$ is a union of simplices defined by rational inequalities. From this point of view, the spherical bounds which appear in the Dirac inequality and its cousins seem unnatural.

\begin{wrapfigure}{R}{.5\textwidth}
\begin{center}
\begin{tikzpicture}[scale=0.5]
\draw[fill=Cyan, draw=Cyan, opacity=0.1] (0,0) circle (6.110100926607787);

\fill[fill=red, opacity=0.3] (0,0) -- (-0.0,2.3094010767585034) -- (2.0,5.773502691896258) -- (2.0,3.464101615137755) -- cycle;
\fill[fill=red, opacity=0.3] (0,0) -- (0.0,-2.3094010767585034) -- (2.0,-5.773502691896258) -- (2.0,-3.464101615137755) -- cycle;
\fill[fill=red, opacity=0.3] (0,0) -- (-1.9999999999999998,1.1547005383792515) -- (-3.9999999999999996,4.618802153517006) -- (-1.9999999999999998,3.4641016151377544) -- cycle;
\fill[fill=red, opacity=0.3] (0,0) -- (2.0,-1.1547005383792517) -- (6.000000000000001,-1.1547005383792517) -- (4.0,0.0) -- cycle;
\fill[fill=red, opacity=0.3] (0,0) -- (-2.0,-1.1547005383792517) -- (-6.000000000000001,-1.1547005383792517) -- (-4.0,-0.0) -- cycle;
\fill[fill=red, opacity=0.3] (0,0) -- (1.9999999999999998,1.1547005383792515) -- (3.9999999999999996,4.618802153517006) -- (1.9999999999999998,3.4641016151377544) -- cycle;
\fill[fill=red, opacity=0.3] (0,0) -- (-0.0,-2.3094010767585034) -- (-2.0,-5.773502691896258) -- (-2.0,-3.464101615137755) -- cycle;
\fill[fill=red, opacity=0.3] (0,0) -- (0.0,2.3094010767585034) -- (-2.0,5.773502691896258) -- (-2.0,3.464101615137755) -- cycle;
\fill[fill=red, opacity=0.3] (0,0) -- (1.9999999999999998,-1.1547005383792515) -- (3.9999999999999996,-4.618802153517006) -- (1.9999999999999998,-3.4641016151377544) -- cycle;
\fill[fill=red, opacity=0.3] (0,0) -- (-2.0,1.1547005383792517) -- (-6.000000000000001,1.1547005383792517) -- (-4.0,0.0) -- cycle;
\fill[fill=red, opacity=0.3] (0,0) -- (2.0,1.1547005383792517) -- (6.000000000000001,1.1547005383792517) -- (4.0,-0.0) -- cycle;
\fill[fill=red, opacity=0.3] (0,0) -- (-1.9999999999999998,-1.1547005383792515) -- (-3.9999999999999996,-4.618802153517006) -- (-1.9999999999999998,-3.4641016151377544) -- cycle;

\draw[fill=unitary, draw=unitary] (1.3333333333333333,-0.0) -- (0.6666666666666666,1.1547005383792515) -- (-0.6666666666666666,1.1547005383792515) -- (-1.3333333333333333,-0.0) -- (-0.6666666666666666,-1.1547005383792515) -- (0.6666666666666666,-1.1547005383792515) -- cycle;
\draw[fill=unitary, draw=unitary] (2.0,-1.1547005383792517) -- (1.3333333333333333,-0.0) -- (2.0,1.1547005383792517) -- cycle;
\draw[fill=unitary, draw=unitary] (2.0,1.1547005383792517) -- (0.6666666666666666,1.1547005383792515) -- (0.0,2.309401076758503) -- cycle;
\draw[fill=unitary, draw=unitary] (0.0,2.309401076758503) -- (-0.6666666666666666,1.1547005383792515) -- (-2.0,1.1547005383792517) -- cycle;
\draw[fill=unitary, draw=unitary] (-2.0,1.1547005383792517) -- (-1.3333333333333333,-0.0) -- (-2.0,-1.1547005383792517) -- cycle;
\draw[fill=unitary, draw=unitary] (-2.0,-1.1547005383792517) -- (-0.6666666666666666,-1.1547005383792515) -- (0.0,-2.309401076758503) -- cycle;
\draw[fill=unitary, draw=unitary] (0.0,-2.309401076758503) -- (0.6666666666666666,-1.1547005383792515) -- (2.0,-1.1547005383792517) -- cycle;
\draw[fill=unitary, draw=unitary] (2.0,5.773502691896258) circle (0.1);
\draw[fill=unitary, draw=unitary] (2.0,-5.773502691896258) circle (0.1);
\draw[fill=unitary, draw=unitary] (-3.9999999999999996,4.618802153517006) circle (0.1);
\draw[fill=unitary, draw=unitary] (6.000000000000001,-1.1547005383792517) circle (0.1);
\draw[fill=unitary, draw=unitary] (-6.000000000000001,-1.1547005383792517) circle (0.1);
\draw[fill=unitary, draw=unitary] (3.9999999999999996,4.618802153517006) circle (0.1);
\draw[fill=unitary, draw=unitary] (-2.0,-5.773502691896258) circle (0.1);
\draw[fill=unitary, draw=unitary] (-2.0,5.773502691896258) circle (0.1);
\draw[fill=unitary, draw=unitary] (3.9999999999999996,-4.618802153517006) circle (0.1);
\draw[fill=unitary, draw=unitary] (-6.000000000000001,1.1547005383792517) circle (0.1);
\draw[fill=unitary, draw=unitary] (6.000000000000001,1.1547005383792517) circle (0.1);
\draw[fill=unitary, draw=unitary] (-3.9999999999999996,-4.618802153517006) circle (0.1);

\draw[dashed, draw opacity=0.2] (-6.0,-7) -- (-6.0,7);
\draw[dashed, draw opacity=0.2] (-4.0,-7) -- (-4.0,7);
\draw[dashed, draw opacity=0.2] (-2.0,7) -- (-2.0,-7);
\draw[dashed, draw opacity=0.2] (0.0,7) -- (0.0,-7);
\draw[dashed, draw opacity=0.2] (2.0,-7) -- (2.0,7);
\draw[dashed, draw opacity=0.2] (4.0,-7) -- (4.0,7);
\draw[dashed, draw opacity=0.2] (6.0,7) -- (6.0,-7);
\draw[dashed, draw opacity=0.2] (-3.875644347017859,-7) -- (-7,-5.196152422706632);
\draw[dashed, draw opacity=0.2] (-7,-2.886751345948129) -- (0.1243556529821408,-7);
\draw[dashed, draw opacity=0.2] (4.124355652982141,-7) -- (-7,-0.5773502691896258);
\draw[dashed, draw opacity=0.2] (7,-6.3508529610858835) -- (-7,1.7320508075688774);
\draw[dashed, draw opacity=0.2] (7,-4.041451884327381) -- (-7,4.041451884327381);
\draw[dashed, draw opacity=0.2] (7,-1.7320508075688774) -- (-7,6.3508529610858835);
\draw[dashed, draw opacity=0.2] (7,0.5773502691896258) -- (-4.124355652982141,7);
\draw[dashed, draw opacity=0.2] (7,2.886751345948129) -- (-0.1243556529821408,7);
\draw[dashed, draw opacity=0.2] (3.875644347017859,7) -- (7,5.196152422706632);
\draw[dashed, draw opacity=0.2] (3.875644347017859,-7) -- (7,-5.196152422706632);
\draw[dashed, draw opacity=0.2] (-0.1243556529821408,-7) -- (7,-2.886751345948129);
\draw[dashed, draw opacity=0.2] (7,-0.5773502691896258) -- (-4.124355652982141,-7);
\draw[dashed, draw opacity=0.2] (7,1.7320508075688774) -- (-7,-6.3508529610858835);
\draw[dashed, draw opacity=0.2] (7,4.041451884327381) -- (-7,-4.041451884327381);
\draw[dashed, draw opacity=0.2] (7,6.3508529610858835) -- (-7,-1.7320508075688774);
\draw[dashed, draw opacity=0.2] (4.124355652982141,7) -- (-7,0.5773502691896258);
\draw[dashed, draw opacity=0.2] (0.1243556529821408,7) -- (-7,2.886751345948129);
\draw[dashed, draw opacity=0.2] (-7,5.196152422706632) -- (-3.875644347017859,7);
\draw[dashed, draw opacity=0.2] (-6.625214782339287,-7) -- (-7,-6.3508529610858835);
\draw[dashed, draw opacity=0.2] (-5.291881449005953,-7) -- (-7,-4.041451884327381);
\draw[dashed, draw opacity=0.2] (-7,-1.7320508075688774) -- (-3.95854811567262,-7);
\draw[dashed, draw opacity=0.2] (-7,0.5773502691896258) -- (-2.6252147823392864,-7);
\draw[dashed, draw opacity=0.2] (-7,2.886751345948129) -- (-1.2918814490059531,-7);
\draw[dashed, draw opacity=0.2] (0.04145188432738026,-7) -- (-7,5.196152422706632);
\draw[dashed, draw opacity=0.2] (1.3747852176607136,-7) -- (-6.708118550994047,7);
\draw[dashed, draw opacity=0.2] (-5.374785217660713,7) -- (2.708118550994047,-7);
\draw[dashed, draw opacity=0.2] (4.041451884327381,-7) -- (-4.041451884327381,7);
\draw[dashed, draw opacity=0.2] (-2.708118550994047,7) -- (5.374785217660713,-7);
\draw[dashed, draw opacity=0.2] (-1.3747852176607136,7) -- (6.708118550994047,-7);
\draw[dashed, draw opacity=0.2] (7,-5.196152422706632) -- (-0.04145188432738026,7);
\draw[dashed, draw opacity=0.2] (7,-2.886751345948129) -- (1.2918814490059531,7);
\draw[dashed, draw opacity=0.2] (2.6252147823392864,7) -- (7,-0.5773502691896258);
\draw[dashed, draw opacity=0.2] (7,1.7320508075688774) -- (3.95854811567262,7);
\draw[dashed, draw opacity=0.2] (7,4.041451884327381) -- (5.291881449005953,7);
\draw[dashed, draw opacity=0.2] (7,6.3508529610858835) -- (6.625214782339287,7);
\draw[dashed, draw opacity=0.2] (-7,-6.92820323027551) -- (7,-6.92820323027551);
\draw[dashed, draw opacity=0.2] (-7,-5.773502691896258) -- (7,-5.773502691896258);
\draw[dashed, draw opacity=0.2] (-7,-4.618802153517007) -- (7,-4.618802153517007);
\draw[dashed, draw opacity=0.2] (-7,-3.464101615137755) -- (7,-3.464101615137755);
\draw[dashed, draw opacity=0.2] (-7,-2.3094010767585034) -- (7,-2.3094010767585034);
\draw[dashed, draw opacity=0.2] (-7,-1.1547005383792517) -- (7,-1.1547005383792517);
\draw[dashed, draw opacity=0.2] (-7,0.0) -- (7,0.0);
\draw[dashed, draw opacity=0.2] (-7,1.1547005383792517) -- (7,1.1547005383792517);
\draw[dashed, draw opacity=0.2] (-7,2.3094010767585034) -- (7,2.3094010767585034);
\draw[dashed, draw opacity=0.2] (-7,3.464101615137755) -- (7,3.464101615137755);
\draw[dashed, draw opacity=0.2] (-7,4.618802153517007) -- (7,4.618802153517007);
\draw[dashed, draw opacity=0.2] (-7,5.773502691896258) -- (7,5.773502691896258);
\draw[dashed, draw opacity=0.2] (-7,6.92820323027551) -- (7,6.92820323027551);
\draw[dashed, draw opacity=0.2] (6.625214782339287,-7) -- (7,-6.3508529610858835);
\draw[dashed, draw opacity=0.2] (5.291881449005953,-7) -- (7,-4.041451884327381);
\draw[dashed, draw opacity=0.2] (3.95854811567262,-7) -- (7,-1.7320508075688774);
\draw[dashed, draw opacity=0.2] (7,0.5773502691896258) -- (2.6252147823392864,-7);
\draw[dashed, draw opacity=0.2] (1.2918814490059531,-7) -- (7,2.886751345948129);
\draw[dashed, draw opacity=0.2] (7,5.196152422706632) -- (-0.04145188432738026,-7);
\draw[dashed, draw opacity=0.2] (6.708118550994047,7) -- (-1.3747852176607136,-7);
\draw[dashed, draw opacity=0.2] (5.374785217660713,7) -- (-2.708118550994047,-7);
\draw[dashed, draw opacity=0.2] (-4.041451884327381,-7) -- (4.041451884327381,7);
\draw[dashed, draw opacity=0.2] (-5.374785217660713,-7) -- (2.708118550994047,7);
\draw[dashed, draw opacity=0.2] (1.3747852176607136,7) -- (-6.708118550994047,-7);
\draw[dashed, draw opacity=0.2] (0.04145188432738026,7) -- (-7,-5.196152422706632);
\draw[dashed, draw opacity=0.2] (-7,-2.886751345948129) -- (-1.2918814490059531,7);
\draw[dashed, draw opacity=0.2] (-2.6252147823392864,7) -- (-7,-0.5773502691896258);
\draw[dashed, draw opacity=0.2] (-3.95854811567262,7) -- (-7,1.7320508075688774);
\draw[dashed, draw opacity=0.2] (-5.291881449005953,7) -- (-7,4.041451884327381);
\draw[dashed, draw opacity=0.2] (-6.625214782339287,7) -- (-7,6.3508529610858835);
\end{tikzpicture}
\end{center}
\vspace{\baselineskip}
\caption{The spherical unitary dual of split $G_2(\mb{R})$ ({\color{unitary} green}), with the bounds given by the Dirac inequality ({\color{Cyan} blue}) and the FPP Conjecture ({\color{Red} red}).} \label{fig:G2}
\end{wrapfigure}
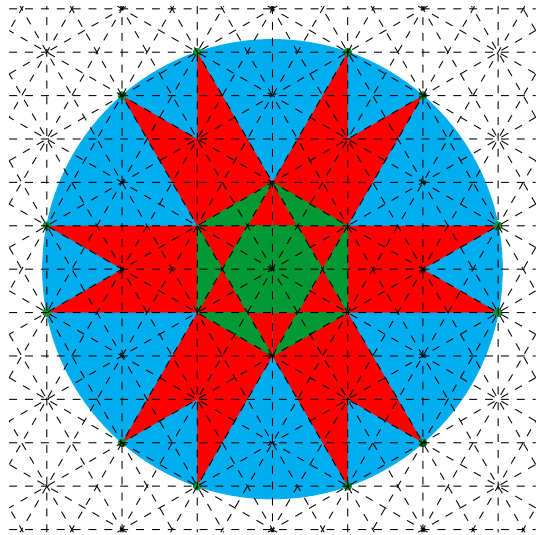

In his pioneering work on spherical representations, Barbasch, partly joint with Ciubotaru, proposed (and proved in some cases) a \emph{polyhedral} bound on the unitary dual (\cite{BCSpherical}). For real reductive groups, these predictions were sharpened rather significantly by Adams, van Leeuwen, Miller, and Vogan (\cite{VoganFPPtalk}). Their refined bound is called the `FPP conjecture' since it is formulated in terms of the \emph{fundamental parallelepiped}, i.e. the region in $\fh^*_{\RR}$ defined by the inequalities $0 \leq \langle \lambda,\check\alpha \rangle \leq 1$ for simple co-roots $\check\alpha$. Roughly, the FPP conjecture states that if $\pi$ is an irreducible unitary representation, then one of the following is true:
\begin{itemize}
    \item[(1)] the \emph{infinitesimal character} of $\pi$ belongs to the fundamental parallelepiped, or
    \item[(2)] $\pi$ is \emph{cohomologically induced} (in the weakly good range) from a unitary representation which satisfies the property in (1).
\end{itemize}
A more precise statement will be given in the following subsection. For most $(P,\sigma)$, the FPP conjecture gives a dramatically sharper bound than the Dirac inequality. Figure \ref{fig:G2} illustrates the difference between these two bounds in the case of the spherical unitary dual $U(P_0,1)$ of split $G_2(\RR)$. Perhaps more importantly, the FPP conjecture suggests a natural way of \emph{organizing} the unitary dual. Namely, it implies that
$$\Pi_u(G(\RR)) = \bigcup_{Q=LU} \mathrm{CohInd}^{\fg}_{\fq}\left(\Pi_{u,\mathrm{FPP}}(L(\RR))\right),$$
where $\Pi_{u,\mathrm{FPP}}(L(\RR))$ is the set of irreducible unitary $L(\RR)$-representations satsifying the FPP condition, $\mathrm{CohInd}^{\fg}_{\fq}$ is cohomological induction, and the union runs over the finite set of $K$-conjugacy classes of $\theta$-stable parabolics of $G$. We note that Adams, van Leeuwen, Miller, and Vogan are in the process of developing an efficient algorithm in {\tt atlas} for computing the set $\Pi_{u,\mathrm{FPP}}(L(\RR))$ for any real reductive group $L(\RR)$. Thus, the FPP conjecture may be the key missing ingredient in the computation of the unitary dual of a real reductive group.

The FPP conjecture is known in several special cases, including for spherical representations of split groups (\cite{BCSpherical}), principal series representations of split $E_8(\RR)$ (announced in \cite{MillerE8talk}), as well as all representations of complex groups (\cite{WongDongFPP}) and indefinite unitary groups (\cite{Wong2024}). In this paper, we will prove the FPP conjecture in general. Our argument is completely case-free and also applies to non-linear coverings of real reductive groups.

\subsection{FPP Conjecture}

From now on let $G$ be a complex connected reductive algebraic group with Lie algebra $\mf{g}$. Write $H$ for the abstract Cartan and $\mf{h} = \mrm{Lie}(H)$. If we choose a maximal torus and Borel subgroup $T \subset B \subset G$, then there are natural identifications $H \cong B/N \cong T$, where $N$ is the unipotent radical of $B$. We thus obtain roots and coroots
\[ \Phi \subset \mb{X}^*(H) := \Hom(H, \mb{C}^\times) \quad \text{and} \quad \check\Phi \subset \mb{X}_*(H) := \Hom(\mb{C}^\times, H)\]
and a Weyl group $W = N_G(T)/T$ acting on $T = H$. We write $\Phi_+, \Phi_- \subset \Phi$ for the corresponding sets of positive and negative roots; here we adopt the convention that the negative roots $\Phi_-$ are the weights of $T$ acting on $\mrm{Lie}(N)$ and $\Phi_+ = -\Phi_-$. The based root datum constructed in this way is canonically independent of the choice of $T$ and $B$. 

Recall the \emph{Harish-Chandra isomorphism}
\begin{align*}
\mf{h}^*/W &\cong \Hom(\mathfrak{Z}(\mf{g})), \mb{C}) \\
\lambda &\mapsto \chi_\lambda
\end{align*}
where $\fZ(\fg)$ is the center of the universal enveloping algebra $U(\fg)$ of $\fg$. This bijection is normalized so that $\rho = \frac{1}{2}\Phi^+ \in \fh^*/W$ corresponds to the infinitesimal character of the trivial representation. We say that $\chi_{\lambda}$ is \emph{real} if $\lambda$ belongs to the canonical real form $\fh_{\RR}^* := \mb{X}^*(H) \otimes_{\ZZ} \RR$ of $\fh^*$.

Now let $G(\RR)$ be a real form of $G$ and let $G_{\RR}$ be a Lie group equipped with a finite map $G_{\RR} \to G$ onto an open subgroup of $G(\RR)$. Choose a Cartan involution $\theta$ of $G$ compatible with $G(\RR)$, and let $K_{\RR}$ be the preimage of $G^{\theta}$ under the map $G_{\RR} \to G$. Then $K_{\RR}$ is a maximal compact subgroup of $G_{\RR}$ and its complexification $K$ admits a finite surjective map onto a open subgroup subgroup of $G^{\theta}$. We write $\fg_{\RR}$ (resp. $\fk_{\RR}$) for the real Lie algebra of $G_{\RR}$ (resp. $K_{\RR}$) and $\fg$ (resp. $\fk$) for the complex Lie algebras of $G$ (resp. $K$). 

Recall that an irreducible $(\fg,K)$-module is \emph{unitary} if it admits a positive-definite $(\fg_{\RR},K_{\RR})$-invariant Hermitian form. The relation to the discussion in the previous subsection is the following: by a result of Harish-Chandra, there is a natural bijection between $\Pi_u(G_{\RR})$ and the set of isomorphism classes of irreducible unitary $(\fg,K)$-modules.

To state the FPP conjecture, we will need to recall the notion of \emph{cohomological induction} (see, for example, \cite[Chapter V]{KnappVogan1995}). Let $P \subset G$ be a $\theta$-stable parabolic subgroup with $\theta$-stable Levi factor $L$. Write $K_P$ and $K_L$ for the preimages of $P$ and $L$ in $K$. The forgetful functor
$$M(\fg,K) \to M(\fg, K_L)$$
has a left adjoint $\Pi$ with left derived functors $\Pi_j$. If $M_L \in M(\fl, K_L)$, we define the \emph{$j$th cohomological induction}
$$\mathcal{L}_jM_L := \Pi_j(U(\fg) \otimes_{U(\fp)} (M_L \otimes \mathrm{det}(\fg/\fp))) \in M(\fg,K),$$
where $M_L$ is regarded as a $U(\fp)$-module via the natural quotient map $\fp \to \fl$. The functors $\mathcal{L}_j$ take finite-length $(\fl,K_L)$-modules to finite-length $(\fg,K)$-modules. Moreover, if $M_L$ has infinitesimal character $\chi_{\lambda}$, then $\mathcal{L}_jM_L$ has infinitesimal character $\chi_{\lambda+\rho_P}$, where $\rho_P=\rho - \rho_L \in \fh^*$ is the half-sum of the roots in $\fg/\fp$.

Now suppose that $M_L$ is an irreducible $(\fl,K_L)$-module. We say that $M_L$ is in the \emph{weakly good range} if
$$\langle \lambda + \rho_P, \alpha^{\vee} \rangle \geq 0, \qquad \forall \alpha \in \Phi(\fh,\fg/\fp).$$
In this case, we have that $\mathcal{L}_jM_L = 0$ for $j \neq S := \dim(K/K_P)$ and that $\mathcal{L}_SM_L$ is irreducible or zero (\cite[Theorem 1.2]{Vogan1984}). Moreover, $\mathcal{L}_SM_L$ is unitary if and only if $M_L$ is (\cite[Theorem 1.3]{Vogan1984}). We say in this case that $\mathcal{L}_SM_L$ is \emph{cohomologically induced from $L$ in the weakly good range}.

\begin{theorem}[FPP Conjecture, see Theorem \ref{thm:body FPP} below] \label{thm:intro FPP}
Let $M$ be an irreducible unitary $(\fg,K)$-module of real infinitesimal character $\chi_{\lambda}$. Conjugating by $W$, we can assume that $\lambda$ is dominant for $G$. Suppose that there is a simple co-root $\alpha^{\vee}$ such that
$$\langle \lambda, \alpha^{\vee} \rangle > 1.$$
Then $M$ is cohomologically induced in the weakly good range from a proper Levi subgroup.
\end{theorem}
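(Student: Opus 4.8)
The plan is to translate the unitarity hypothesis into a statement about mixed Hodge modules on the flag variety $G/B$ and then exploit a positivity/global-generation property of the Hodge filtration. Recall that by the Beilinson–Bernstein correspondence, the irreducible $(\fg,K)$-module $M$ with real infinitesimal character $\chi_\lambda$ (with $\lambda$ dominant) corresponds to an irreducible $K$-equivariant $\mathcal{D}_\lambda$-module on $G/B$, i.e. to the IC extension $\mathrm{IC}(\mathcal{Q},\mathcal{L})$ of a $K$-equivariant local system $\mathcal{L}$ on a $K$-orbit $\mathcal{Q}$. By the theory of Hodge modules for real groups (the work underlying this paper), this $\mathcal{D}$-module carries a canonical Hodge filtration $F_\bullet M$, and unitarity of $M$ is equivalent to a signature condition on the induced Hermitian pairing on the associated graded $\Gr^F M$, which is a module over $\Gr^F \mathcal{D}_\lambda = \mathrm{Sym}^\bullet T_{G/B}$, i.e. a coherent sheaf on the cotangent bundle $T^*(G/B)$. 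The key point is that unitarity forces $\Gr^F M$ — equivalently its global sections, a module over $\mathrm{Sym}^\bullet\fg$ graded by infinitesimal character — to have a very constrained structure.

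Next I would bring in the global-generation results for Hodge filtrations: when $\lambda$ is dominant and sufficiently far from the walls, $F_\bullet M$ is generated in low degree, and the successive quotients $F_p M / F_{p-1} M$ are globally generated as $\Ocal_{G/B}$-modules after a twist. The hypothesis $\langle\lambda,\alpha^\vee\rangle > 1$ for a simple coroot $\alpha^\vee$ says precisely that $\lambda$ is strictly outside the fundamental parallelepiped in the $\alpha$-direction. The strategy is to use this to produce a nonzero map out of (or into) $M$ witnessing that $M$ is built from a representation on a smaller group. Concretely: the condition $\langle\lambda,\alpha^\vee\rangle>1$ means that the line bundle $\Ocal(-\varpi_\alpha)$ or an appropriate positive multiple remains "positive enough" relative to $\lambda$, so that a Hodge-theoretic vanishing/generation statement (a Saito-type Kodaira vanishing for the filtered $\mathcal{D}$-module) applies along the $\mathbb{P}^1$-fibration $G/B \to G/P_\alpha$ for the minimal parabolic $P_\alpha$. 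This should force the $\mathcal{D}$-module, together with its Hodge filtration, to be pulled back (in the $\mathcal{D}$-module sense) or pushed forward from $G/P_\alpha$-type geometry — which on the representation-theoretic side is exactly cohomological induction from a proper $\theta$-stable Levi.

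The main obstacle, and the heart of the argument, is the last step: converting the Hodge-theoretic positivity into the precise geometric statement that the underlying $K$-orbit and its Hodge-filtered local system are induced from a $\theta$-stable parabolic, and checking that the resulting cohomological induction is in the \emph{weakly good} range. The weakly good condition $\langle\lambda+\rho_P,\alpha^\vee\rangle\ge 0$ for all $\alpha\in\Phi(\fh,\fg/\fp)$ is delicate because $\lambda$ dominant for $G$ does not automatically give it; one must choose the $\theta$-stable parabolic $\fq$ compatibly with the direction $\alpha$ in which $\lambda$ exits the parallelepiped, and then argue that the "excess positivity" $\langle\lambda,\alpha^\vee\rangle - 1 > 0$ is exactly what is needed to clear the $\rho_P$-shift. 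I expect this to require a careful analysis of how the Hodge filtration interacts with the Zariski closure of the $K$-orbit near the boundary divisor of the $P_\alpha$-fibration, using the compatibility of the Hodge filtration with the decomposition theorem for the projection $G/B \to G/P_\alpha$. Once the parabolic is identified, irreducibility and unitarity of the inducing module follow from Vogan's results (\cite{Vogan1984}, quoted above) together with the fact that cohomological induction in the good range is an equivalence onto its image compatible with Hodge filtrations and signatures, so unitarity descends.
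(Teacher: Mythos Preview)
Your proposal assembles several of the right ingredients—Beilinson--Bernstein localization, the Hodge filtration, and the observation that $\langle\lambda,\check\alpha\rangle>1$ makes $\mc{O}(-\varpi_\alpha)$ ``positive enough''—but the logical architecture diverges from the paper's, and the mechanism you propose has a genuine gap. You aim to use Hodge-theoretic vanishing along the $\mb{P}^1$-fibration $G/B\to G/P_\alpha$ to force the $\mc{D}$-module to be pulled back, and then read off cohomological induction. There is no general principle by which such vanishing forces a pullback structure on an irreducible $K$-equivariant $\mc{D}$-module, and the step you yourself flag as the ``main obstacle'' has no clear resolution along these lines. Moreover, the relevant partial flag variety is the one for the \emph{maximal} parabolic omitting $\alpha$ (so that $\mc{O}(\varpi_\alpha)$ descends to a very ample bundle there), not the minimal one. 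Finally, your worry about the weakly good range is misplaced: the inducing module has infinitesimal character $\lambda-\rho_P$, so the weakly good condition reads $\langle\lambda,\check\beta\rangle\geq 0$ for $\beta\in\Phi(\mf{h},\mf{g}/\mf{p})$, which is just dominance of $\lambda$ and was assumed from the start.

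The paper's argument runs the contrapositive and never tries to exhibit the inducing parabolic. The precise unitarity criterion used (Theorem~\ref{thm:unitarity criterion}) is that $\theta$ acts on $\Gr^F_p M$ by the single sign $(-1)^p$; hence to disprove unitarity it suffices to produce two $\theta$-eigenvectors of opposite sign in the lowest piece $F_0 M$. If $M$ is \emph{not} cohomologically induced, Proposition~\ref{prop:coh induction orbit} says that for $x$ in the supporting $K$-orbit the image $\pi(x)$ in the partial flag variety $\mc{P}$ is not $\theta$-fixed, so $\pi(x)\neq\theta\pi(x)$. Very ampleness of $\mc{O}(\mu)$ on $\mc{P}$ (with $\mu=\varpi_\alpha$, or $\varpi_\alpha+\varpi_{\delta\alpha}$ when $\delta\alpha\neq\alpha$) then yields a section $s$ with $s(x)\neq 0$ but $s(\theta x)=0$, so $s_\pm=(1\pm\theta)s$ are both nonzero at $x$ with opposite $\theta$-eigenvalues. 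The hypothesis $\langle\lambda,\check\alpha\rangle>1$ enters exactly once, via Lemma~\ref{lem:generation} (a Beilinson--Bernstein splitting argument), to ensure that $\mc{M}\otimes\mc{O}(-\mu)$ remains globally generated; combined with Hodge global generation (Theorem~\ref{thm:global generation}) this produces a $\theta$-eigensection $v$ of $F_0\mc{M}\otimes\mc{O}(-\mu)$ with $v_x\neq 0$. Then $v\otimes s_+$ and $v\otimes s_-$ lie in $F_0 M$, are nonzero at $x$, and have opposite $\theta$-signs, contradicting unitarity.
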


We note that if $G_{\RR}$ is a simple noncompact group and $M$ is the Langlands quotient of a minimal principal series representation, then $M$ cannot be cohomologically induced in the weakly good range (this follows immediately from Proposition \ref{prop:coh induction orbit} below). So in this case Theorem \ref{thm:body FPP} states that the infinitesimal character of $M$ is contained in the FPP.

Our proof of Theorem \ref{thm:intro FPP} is an application of the Hodge-theoretic approach to unitary representations of real groups, conjectured by Schmid and Vilonen \cite{SchmidVilonen2011} and developed into a working theory in \cite{DV}. The cornerstone of this approach is the unitarity criterion \cite[Theorem 5.3]{DV} (see Theorem \ref{thm:unitarity criterion} below for a precise statement), which gives an infinite sequence of conditions that must be satisfied by any unitary representation. Our proof shows that the FPP bound arises from the first of these conditions alone.

\subsection{Acknowledgments}

We would like to thank David Vogan for explaining the FPP conjecture to us in detail. We are also grateful to Jeff Adams, Stephen Miller, and Kari Vilonen for numerous helpful discussions.

\section{Hodge theory and unitarity}

In this section, we briefly recall the salient points of Beilinson-Bernstein localization and the Hodge-theoretic approach to unitarity.

\subsection{Beilinson-Bernstein localization}

Let $\mc{B}$ denote the flag variety of the complex reductive group $G$, i.e., the complex variety of Borel subgroups of $G$. We have a canonical identification
\begin{equation} \label{eq:pic}
\begin{aligned}
\mb{X}^*(H) &\cong \mrm{Pic}^G(\mc{B}) \\
\mu &\mapsto \mc{O}(\mu)
\end{aligned}
\end{equation}
between the Picard group of $G$-equivariant line bundles on $\mc{B}$ and the character lattice of the abstract Cartan $H$. We fix our conventions so that the dominant weights in $\mb{X}^*(H)$ correspond to the semi-ample line bundles on $\mc{B}$.

The basic construction of Beilinson-Bernstein theory \cite{BB1} attaches to every $\lambda \in \mf{h}^*$ a sheaf of rings $\mc{D}_\lambda$ on $\mc{B}$, called a \emph{sheaf of twisted differential operators}, which is locally (but not globally) isomorphic to the ordinary sheaf of algebraic differential operators on $\mc{B}$. We arrange our conventions so that if $\mu \in \mb{X}^*(H)$ then $\mc{D}_{\mu + \rho} = \mrm{Diff}(\mc{O}(\mu))$ is the sheaf of differential operators on the line bundle $\mc{O}(\mu)$, where $\rho \in \mf{h}^*$ is half the sum of the positive roots of $G$. In general, we have an equivalence of categories
\[ \mc{O}(\mu) \otimes - \colon \Mod(\mc{D}_\lambda) \xrightarrow{\sim} \Mod(\mc{D}_{\lambda + \mu}) \]
for $\lambda \in \mf{h}^*$ and $\mu \in \mb{X}^*(H)$.

The key property of $\mc{D}_\lambda$ is that its global sections
\[ \Gamma(\mc{B}, \mc{D}_\lambda) = \frac{U(\mf{g})}{\langle x - \chi_\lambda(x) \mid x \in \mf{Z}(\mf{g})\rangle}\]
are identified with the central quotient of $U(\mf{g})$ in which $\mf{Z}(\mf{g})$ acts through the character $\chi_\lambda$. In particular, we have a global sections functor
\[ \Gamma \colon \Mod(\mc{D}_\lambda) \to \Mod(U(\mf{g}))_{\chi_\lambda},\]
where $\Mod(\mc{D}_\lambda)$ is the category of quasi-coherent sheaves of $\mc{D}_\lambda$-modules on $\mc{B}$ and $\Mod(U(\mf{g}))_{\chi_\lambda}$ is the category of $\mf{g}$-modules with infinitesimal character $\chi_\lambda$. Similarly, for any algebraic group $K$ equipped with a homomorphism $K \to G$, we obtain a functor
\[ \Gamma \colon \Mod(\mc{D}_\lambda, K) \to \Mod(\mf{g}, K)_{\chi_\lambda},\]
where $\Mod(\mc{D}_\lambda, K)$ is the category of (strongly) $K$-equivariant objects in $\Mod(\mc{D}_\lambda)$ and $\Mod(\mf{g}, K)_{\chi_\lambda}$ is the category of $(\mf{g}, K)$-modules with infinitesimal character $\chi_\lambda$.

The following fundamental fact is a standard corollary of the main theorem of \cite{BB1}. For the statement, we recall that $\lambda \in \mf{h}^*$ is called \emph{integrally dominant} if $\langle \lambda, \check\alpha \rangle \not\in \mb{Z}_{<0}$ for all positive coroots $\check\alpha \in \check\Phi_+$.

\begin{thm}[Beilinson-Bernstein]
Assume that $\lambda$ is integrally dominant and let $\mc{M} \in \Mod(\mc{D}_\lambda, K)$ be irreducible. Then either $\Gamma(\mc{M}) = 0$ or $\Gamma(\mc{M}) \in \Mod(\mf{g}, K)_{\chi_\lambda}$ is irreducible. Moreover, this defines a bijection
\[ \Gamma \colon \left\{\left.\begin{matrix} \mc{M} \in \Mod(\mc{D}_\lambda, K) \\ \text{irreducible} \end{matrix}\,\right|\, \Gamma(\mc{M}) \neq 0 \right\} \xrightarrow{\sim} \{ M \in \Mod(\mf{g}, K)_{\chi_\lambda} \; \text{irreducible}\},\]
where both sides are taken up to isomorphism.
\end{thm}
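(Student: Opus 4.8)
This is the standard ``$\mathcal{D}$-affineness'' corollary of Beilinson--Bernstein, and I would prove it by the usual adjunction argument; the only substantial input is the cohomology vanishing of \cite{BB1}. Write $U_\lambda = \Gamma(\mathcal{B}, \mathcal{D}_\lambda)$ and let $\Delta_\lambda = \mathcal{D}_\lambda \otimes_{U_\lambda}(-)$ be the localization functor, left adjoint to $\Gamma$; the adjunction persists after imposing $K$-equivariance. I take as given the theorem of \cite{BB1} that for $\lambda$ integrally dominant the functor $\Gamma$ is \emph{exact} on $\mathrm{Mod}(\mathcal{D}_\lambda)$ (equivalently $H^{>0}(\mathcal{B}, \mathcal{M}) = 0$ for all quasi-coherent $\mathcal{D}_\lambda$-modules $\mathcal{M}$); being a cohomology functor on the Noetherian scheme $\mathcal{B}$, it also commutes with arbitrary direct sums.

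First I would record the formal consequences. Applying $\Delta_\lambda$ to a free presentation $U_\lambda^{(J)} \to U_\lambda^{(I)} \to M \to 0$ and then the exact, sum-preserving functor $\Gamma$ returns the same presentation, so the unit $\eta_M \colon M \to \Gamma\Delta_\lambda M$ is an isomorphism for every $U_\lambda$-module $M$; that is, $\Delta_\lambda$ is fully faithful. By the triangle identities this makes $\Gamma(\epsilon_\mathcal{N})$ an isomorphism for every $\mathcal{D}_\lambda$-module $\mathcal{N}$ ($\epsilon$ the counit), so by exactness $\ker\epsilon_\mathcal{N}$ and $\mathrm{coker}\,\epsilon_\mathcal{N}$ have no global sections. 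Two remarks I would extract: (i) $\eta$ is the identity on free modules, so $\epsilon_{\Delta_\lambda M} = \mathrm{id}$, whence $\Delta_\lambda M$ and all of its quotients are generated by global sections — in particular a quotient of $\Delta_\lambda M$ with vanishing global sections is zero; (ii) the image of $\epsilon_\mathcal{N} \colon \Delta_\lambda\Gamma\mathcal{N} \to \mathcal{N}$ is precisely the $\mathcal{D}_\lambda$-submodule of $\mathcal{N}$ generated by $\Gamma(\mathcal{N})$, hence all of $\mathcal{N}$ when $\mathcal{N}$ is irreducible with $\Gamma(\mathcal{N}) \neq 0$.

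Then, for the theorem: given irreducible $\mathcal{M} \in \mathrm{Mod}(\mathcal{D}_\lambda, K)$ with $\Gamma(\mathcal{M}) \neq 0$ and $0 \neq N \subseteq \Gamma(\mathcal{M})$, the composite $\Delta_\lambda N \to \Delta_\lambda\Gamma\mathcal{M} \xrightarrow{\epsilon_\mathcal{M}} \mathcal{M}$ corresponds under the adjunction to $N \hookrightarrow \Gamma\mathcal{M}$, so applying $\Gamma$ and using that $\eta_N$ is an isomorphism recovers that inclusion; exactness of $\Gamma$ then shows its image $\mathcal{I} \subseteq \mathcal{M}$ has $\Gamma(\mathcal{I}) \cong N \neq 0$, so $\mathcal{I} = \mathcal{M}$ and hence $N = \Gamma(\mathcal{M})$ — i.e.\ $\Gamma(\mathcal{M})$ is zero or irreducible. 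For surjectivity of the bijection, given irreducible $M \in \mathrm{Mod}(\mathfrak{g}, K)_{\chi_\lambda}$ I would take $T \subseteq \Delta_\lambda M$ to be the largest submodule with $\Gamma(T) = 0$ (the sum of all such still has no sections, since $\Gamma$ is right exact and commutes with filtered colimits) and set $\mathcal{M} = \Delta_\lambda M/T$: then $\Gamma(\mathcal{M}) = M$ by exactness, and $\mathcal{M}$ is irreducible because a nonzero proper submodule $\mathcal{P}$ would have $\Gamma(\mathcal{P}) \neq 0$ (else its preimage, a submodule with no sections strictly containing $T$, would contradict maximality of $T$), hence $\Gamma(\mathcal{P}) = M$ by irreducibility of $M$, making $\mathcal{M}/\mathcal{P}$ a nonzero quotient of $\Delta_\lambda M$ with no sections, contradicting remark (i). For injectivity, given irreducible $\mathcal{M}_1, \mathcal{M}_2$ with $\Gamma(\mathcal{M}_i) \cong M$: fixing such isomorphisms, remark (ii) makes each counit $\Delta_\lambda M \twoheadrightarrow \mathcal{M}_i$ surjective, so $\mathcal{M}_i \cong \Delta_\lambda M/\mathcal{N}_i$ with $\mathcal{N}_i$ maximal; exactness forces $\Gamma(\mathcal{N}_i) = 0$ (else $\Gamma(\mathcal{M}_i) = 0$), so $\mathcal{N}_i \subseteq T$, and maximality gives $\mathcal{N}_i = T$, whence $\mathcal{M}_1 \cong \Delta_\lambda M/T \cong \mathcal{M}_2$.

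I expect no genuine obstacle past correctly importing the Beilinson--Bernstein vanishing; the rest is bookkeeping with the unit and counit of $(\Delta_\lambda, \Gamma)$, and the equivariant statement needs no new argument since $\Delta_\lambda$, $\Gamma$ and the adjunction all respect $K$-structures (and $\eta_M$ is an isomorphism of $(\mathfrak{g},K)$-modules as soon as it is one of underlying $\mathfrak{g}$-modules). The one place that deserves care in the write-up is the full faithfulness of $\Delta_\lambda$ (the unit isomorphism), which is precisely where exactness of $\Gamma$ — not merely its right exactness — gets used.
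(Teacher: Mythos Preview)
The paper does not actually prove this theorem: it is stated as ``a standard corollary of the main theorem of \cite{BB1}'' and no argument is given. Your proposal supplies exactly the standard derivation from Beilinson--Bernstein vanishing via the $(\Delta_\lambda, \Gamma)$ adjunction, and it is correct; there is nothing further to compare.
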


Let us comment briefly on the classification of the irreducible objects in $\Mod(\mc{D}_\lambda, K)$ in our setting of interest, where $K$ is the complexification of a maximal compact subgroup $K_\mb{R} \subset G_\mb{R}$. In this case, the group $K$ acts on the flag variety $\mc{B}$ with finitely many orbits, so standard $\mc{D}$-module theory implies that every irreducible $\mc{M} \in \Mod(\mc{D}_\lambda, K)$ is of the form
\[ \mc{M} = j_{!*}\gamma \]
for $j \colon \mb{O} \to \mc{B}$ the inclusion of a $K$-orbit and $\gamma$ an irreducible twisted local system on $\mb{O}$. Here $j_{!*}$ denotes the intermediate extension for $\mc{D}$-modules. We will refer to $\mb{O}$ as the \emph{$K$-orbit supporting $\mc{M}$}.

\subsection{Cohomological induction and localization}

The Beilinson-Bernstein description of irreducible $(\mf{g}, K)$-modules interacts in a very simple way with cohomological induction. We refer the reader to \cite[\S 8.2]{DV} for a brief summary. We will need only the following simple criterion for an irreducible $(\mf{g}, K)$-module to be cohomologically induced in the weakly good range.

\begin{prop} \label{prop:coh induction orbit}
Assume that $\lambda \in \mf{h}^*_\mb{R}$ is dominant and let $\mc{M} \in \Mod(\mc{D}_\lambda, K)$ be an irreducible object with supporting $K$-orbit $\mb{O} \subset \mc{B}$ such that $M = \Gamma(\mc{M}) \neq 0$. Let $\mc{P}$ be a partial flag variety parametrizing a $\theta$-stable conjugacy class of parabolic subgroups and let $\pi \colon \mc{B} \to \mc{P}$ be the natural projection. Then $M$ is cohomologically induced from a $\theta$-stable parabolic $P$ in the conjugacy class $\mc{P}$ if and only if $\pi(\mb{O}) \subset \mc{P}$ contains the corresponding $\theta$-fixed point $x$.
\end{prop}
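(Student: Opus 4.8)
The plan is to reduce the statement to a known characterization of cohomologically induced modules in terms of the geometry of $K$-orbits on partial flag varieties, exploiting that the partial flag variety $\mc{P}$ is itself a flag variety for the Levi $L$ of the $\theta$-stable parabolic. First I would recall the standard fact (see \cite[\S 8.2]{DV}) that cohomological induction corresponds on the localized side to the $!$-pushforward (or, in the weakly good range, the intermediate extension) along $\pi \colon \mc{B} \to \mc{P}$ of a $\mc{D}_{\lambda_L}$-module on the fiber $\pi^{-1}(x) \cong \mc{B}_L$ over the $\theta$-fixed point $x$; equivalently, the localization of $\mc{L}_S M_L$ is supported on $\pi^{-1}(\overline{\pi(\mb{O})})$ and is generically (along $\pi^{-1}(x)$) the intermediate extension of an $L$-picture. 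The crucial point is that the $\theta$-fixed point $x \in \mc{P}$ is precisely the point whose stabilizer in $G$ is the $\theta$-stable parabolic $P$, and $\pi^{-1}(x)$ is a single $K_L$-orbit-closure picture of the flag variety $\mc{B}_L$ of $L$.

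Next I would argue both directions. For the ``only if'' direction: if $M = \mc{L}_S M_L$ is cohomologically induced from $P$ in the weakly good range, then by \cite[Theorem 1.2]{Vogan1984} $M$ is irreducible (nonzero by hypothesis), so by Beilinson--Bernstein its localization $\mc{M} = j_{!*}\gamma$ is the unique irreducible $\mc{D}_\lambda$-module with $\Gamma(\mc{M}) = M$; matching this against the localization of $\mc{L}_S M_L$ (which by the summary in \cite[\S 8.2]{DV} is the intermediate extension from $\pi^{-1}(x)$ of the localization of $M_L$, pushed forward) forces $\supp \mc{M} \subset \pi^{-1}(\overline{\pi(\mb{O})})$ to meet $\pi^{-1}(x)$, hence $x \in \overline{\pi(\mb{O})}$; but a $\theta$-fixed point is a closed $K$-orbit, so if $x$ lies in the closure of the (irreducible, $K$-stable) set $\pi(\mb{O})$ then in fact $x \in \pi(\mb{O})$. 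Conversely, if $x \in \pi(\mb{O})$, then the fiber $\pi^{-1}(x)$ meets $\mb{O}$, and I would show that $\mc{M}|_{\pi^{-1}(x)}$ restricts to (an intermediate extension of) an irreducible $(\fl, K_L)$-localization data $\gamma_L$ on a $K_L$-orbit in $\mc{B}_L$; setting $M_L := \Gamma_{\mc{B}_L}(j_{L,!*}\gamma_L)$, the weak-goodness hypothesis $\lambda$ dominant guarantees $\lambda_L = \lambda|$ is integrally dominant for $L$, so $M_L$ is irreducible, and the base-change compatibility of $\Gamma$ with $\pi_*$ and $\Pi_j$ (again \cite[\S 8.2]{DV}) identifies $\Gamma(\mc{M})$ with $\mc{L}_S M_L$.

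The main obstacle I anticipate is making the correspondence ``cohomological induction $\leftrightarrow$ $\pi_{!*}$ from the $\theta$-fixed fiber'' precise enough to read off the support condition in both directions, in particular (a) verifying that in the weakly good range the relevant derived pushforward is concentrated in one degree and is an intermediate extension rather than a more complicated complex, so that irreducibility transfers cleanly, and (b) checking the normalization bookkeeping: the twist $\det(\fg/\fp)$ in the definition of $\mc{L}_j$, the shift $\rho_P$ in infinitesimal characters, and the identification of $\mc{D}_\lambda|_{\pi^{-1}(x)}$ with $\mc{D}_{\lambda_L}$ on $\mc{B}_L$ must all be tracked so that ``$\lambda$ dominant for $G$'' really does land $M_L$ in the weakly good range for $L$. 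Both of these are, however, exactly the content summarized in \cite[\S 8.2]{DV}, so the proof should amount to quoting that summary and combining it with the elementary observation that a $\theta$-fixed point of $\mc{P}$ is a closed $K$-orbit (whence ``in the closure of $\pi(\mb{O})$'' upgrades to ``in $\pi(\mb{O})$'').
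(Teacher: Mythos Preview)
Your approach is essentially the paper's: both invoke \cite[\S 8.2]{DV} (the paper cites specifically Proposition~8.4 there) for the geometric description of cohomological induction, together with the elementary observation that a $\theta$-fixed point of $\mc{P}$ is a closed $K$-orbit. One small slip worth correcting: the relevant functor is not ``$!$-pushforward along $\pi \colon \mc{B} \to \mc{P}$'' but pushforward along the closed embedding $i \colon \pi^{-1}(K\cdot x) \hookrightarrow \mc{B}$; the paper makes this explicit in the ``if'' direction (the only one it spells out, as it is the only one used later), noting that $x \in \pi(\mb{O})$ forces $\pi(\mb{O}) = K\cdot x$ closed, whence $\mc{M} = i_*\mc{M}'$, and then one restricts $\mc{M}'$ to the fiber $\mc{B}_L = \pi^{-1}(x)$ to obtain $\mc{M}_L$ and applies Proposition~8.4.
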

\begin{proof}
This is an immediate corollary of \cite[Proposition 8.4]{DV}. We give the argument for the ``if'' direction as this is the only one we will use. Suppose that $\pi(\mb{O})$ contains a $\theta$-fixed point $x$ corresponding to a $\theta$-stable parabolic $P$ with Levi factor $L$. Then we must have $\pi(\mb{O}) = K \cdot x$, a closed $K$-orbit, and hence that $\mc{M}$ is supported on $\pi^{-1}(K\cdot x)$. So we may write
\[ \mc{M} = i_*\mc{M}' \quad \text{for some $\mc{M}' \in \Mod(\mc{D}_{\pi^{-1}(K \cdot x), \lambda}, K)$},\]
where $i \colon \pi^{-1}(K \cdot x) \to \mc{B}$ is the inclusion. Restricting to the fiber $\mc{B}_L = \pi^{-1}(x)$, we obtain an irreducible $\mc{D}_{\mc{B}_L, \lambda - \rho_P}$-module $\mc{M}_L$; here the shift by $\rho_P$ comes from the different $\rho$-shifts in the labelling for twisted differential operators on $\mc{B}_L$ versus $\mc{B}$. Setting $M_L = \Gamma(\mc{B}_L, \mc{M}_L)$, we have by \cite[Proposition 8.4]{DV} that
\[ M = \mc{L}_S(M_L)\]
is cohomologically induced from $M_L$. Since $\lambda = \lambda - \rho_P + \rho_P$ is dominant by assumption, the cohomological induction is in the weakly good range.
\end{proof}

\subsection{Mixed Hodge modules and localization}

Finally, let us recall some of the Hodge-theoretic picture of \cite{SchmidVilonen2011}, as developed in \cite{DV}.

First, if $\lambda \in \mf{h}^*_\mb{R} := \mb{X}^*(H) \otimes \mb{R}$, then there are categories $\mhm(\mc{D}_\lambda)$ and $\mhm(\mc{D}_\lambda, K)$ of (equivariant) mixed Hodge $\mc{D}_\lambda$-modules, equipped with forgetful functors 
\[ \mhm(\mc{D}_\lambda) \to \Mod(\mc{D}_\lambda, F_\bullet)  \quad \text{and} \quad \mhm(\mc{D}_\lambda, K) \to \Mod(\mc{D}_{\lambda}, F_\bullet, K) \]
to the categories of (equivariant) $\mc{D}_\lambda$-modules equipped with a good filtration compatible with the order filtration $F_\bullet \mc{D}_\lambda$. For $\mc{M} \in \mhm(\mc{D}_\lambda)$, the associated filtration $F_\bullet \mc{M}$ is called the \emph{Hodge filtration}; it is an increasing filtration by $\mc{O}_\mc{B}$-coherent subsheaves, which is locally finitely generated over $F_\bullet \mc{D}_\lambda$. We note that, essentially by definition (see \cite[\S 2.2]{DV}), we have
\[ F_p(\mc{M} \otimes \mc{O}(\mu)) = (F_p\mc{M}) \otimes \mc{O}(\mu) \quad \text{for $\mu \in \mb{X}^*(H)$}. \]

One of the main theorems of \cite{DV} is:

\begin{thm}[{\cite[Theorem 5.3]{DV}}] \label{thm:global generation}
Assume that $\lambda \in \mf{h}^*_\mb{R}$ is dominant (i.e., $\langle \lambda, \check\alpha \rangle \geq 0$ for all positive coroots $\check\alpha$). If $\mc{M} \in \mhm(\mc{D}_\lambda)$ is generated by global sections as a $\mc{D}_\lambda$-module, then so is its Hodge filtration $F_\bullet\mc{M}$, i.e.,
\[ F_p\mc{M} = \sum_{p_1 + p_2 = p} F_{p_1}\mc{D}_\lambda \cdot \Gamma(F_{p_2}\mc{M}).\]
\end{thm}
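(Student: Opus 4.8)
The plan is to recast the conclusion as a statement about associated gradeds and reduce it to a Kodaira--Saito-type vanishing theorem on the flag variety. Since $\mc{M}$ is a mixed Hodge module it is in particular a coherent $\mc{D}_\lambda$-module, so it is generated by finitely many global sections; these lie in $\Gamma(\mc{B},F_{p_0}\mc{M})$ for some $p_0$, so the canonical map $\mc{D}_\lambda\otimes_{U_\lambda}\Gamma(\mc{B},\mc{M})\to\mc{M}$ is surjective, where $U_\lambda=\Gamma(\mc{B},\mc{D}_\lambda)$. Pushing the good filtration $\Gamma(\mc{B},F_\bullet\mc{M})$ on $\Gamma(\mc{B},\mc{M})$, together with the order filtration $F_\bullet\mc{D}_\lambda$, forward along this surjection produces exactly the subfiltration $G_p\mc{M}:=\sum_{p_1+p_2=p}F_{p_1}\mc{D}_\lambda\cdot\Gamma(\mc{B},F_{p_2}\mc{M})\subseteq F_p\mc{M}$ from the statement, which is exhaustive by the global generation hypothesis. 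I would prove $G_\bullet\mc{M}=F_\bullet\mc{M}$ by induction on $p$: granting $G_{p-1}\mc{M}=F_{p-1}\mc{M}$, the quotient $F_p\mc{M}/G_p\mc{M}$ is a quotient of $\operatorname{gr}^F_p\mc{M}$, and it vanishes provided (i) $\Gamma(\mc{B},F_p\mc{M})\to\Gamma(\mc{B},\operatorname{gr}^F_p\mc{M})$ is surjective and (ii) $\operatorname{gr}^F_p\mc{M}$ is generated by its global sections as an $\mc{O}_\mc{B}$-module.

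Part (i) follows from the long exact cohomology sequence of $0\to F_{p-1}\mc{M}\to F_p\mc{M}\to\operatorname{gr}^F_p\mc{M}\to 0$ together with the following vanishing, which is a Hodge-filtered refinement of Beilinson--Bernstein vanishing and is the technical heart of the argument: for $\lambda\in\mf{h}^*_\mb{R}$ dominant, $\mc{M}\in\mhm(\mc{D}_\lambda)$, and $\mu\in\mb{X}^*(H)$ dominant, $H^i(\mc{B},F_p\mc{M}\otimes\mc{O}(\mu))=0$ for all $i>0$ and all $p$ (equivalently with $\operatorname{gr}^F_p\mc{M}$ in place of $F_p\mc{M}$). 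I would prove this by dévissage along the weight filtration to the pure case; then, when $\lambda$ is integral and regular, by trading $\lambda$ for an ample line bundle via the equivalence $\mc{O}(\mu')\otimes-\colon\mhm(\mc{D}_{\lambda-\mu'})\xrightarrow{\sim}\mhm(\mc{D}_\lambda)$ and invoking Saito's extension of Kodaira vanishing to Hodge modules, applied to the de Rham complex, with the strictness/degeneration needed to pass from that complex to the single sheaf $\operatorname{gr}^F_p\mc{M}$; and finally, for singular or non-integral $\lambda$ or a merely semi-ample $\mc{O}(\mu)$, by factoring the relevant line bundle through a projection $\pi\colon\mc{B}\to\mc{P}$ to a partial flag variety on which it is ample and using Saito's direct image theorem for the Hodge filtration of $\pi_*\mc{M}$ together with relative Kodaira--Saito vanishing along $\pi$, inducting along a chain of such fibrations down to the $\mb{P}^1$-bundle case.

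Part (ii) is where the $\mc{D}_\lambda$-global-generation hypothesis must genuinely enter, since an arbitrary coherent sheaf on $\mc{B}$ is not globally generated and the naive Castelnuovo--Mumford regularity bound would require cohomology vanishing for \emph{negative} twists, which is false. Here I would argue by induction on $\dim\operatorname{supp}\mc{M}$ using the structure of an MHM as an iterated extension of intermediate extensions $j_{!*}\gamma$ of polarizable variations of Hodge structure along smooth locally closed $j\colon\Orb\hookrightarrow\mc{B}$: on the open stratum the relevant graded is built from the locally free Hodge bundles of $\gamma$ twisted by the generating sections, and one propagates the estimate across strata via Saito's description of the Hodge filtration of $j_{!*}$ (vanishing below the generic index, behaviour under nearby and vanishing cycles). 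An alternative, closely tied to Part (i), is to present $\mc{M}$ as a \emph{strictly filtered} quotient of a standard module induced from a closed $K$-orbit, whose Hodge filtration is manifestly globally generated, the strictness being verified by applying the vanishing of Part (i) to the kernel.

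I expect the principal obstacles to be, first, making Saito's Kodaira vanishing actually apply to $\mc{D}_\lambda$-modules with non-integral $\lambda$ and with only the semi-ample positivity that dominance of $\lambda$ provides (dominant line bundles on $\mc{B}$ are base-point-free but not ample): the fibration and direct-image reduction to the ample integral case is natural but demands careful bookkeeping of Hodge filtrations, weights, and $\rho$-shifts. Second, and more seriously, the control of $\operatorname{gr}^F_p\mc{M}$ in Part (ii) is not formal and must feed in the global generation hypothesis, through either the stratified induction or a strictly-filtered standard resolution; this is the step I would expect to be most delicate. The passage from mixed to pure Hodge modules should, by contrast, be routine dévissage throughout.
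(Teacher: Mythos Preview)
The paper does not contain a proof of this theorem: it is quoted verbatim as \cite[Theorem 5.3]{DV} and used as a black box. There is therefore nothing in the present paper to compare your proposal against; the argument you are sketching is an attempt at the main technical theorem of the cited reference, not at anything proved here.

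That said, a brief comment on the proposal itself. Your reduction to (i) a filtered vanishing statement $H^{>0}(\mc{B}, F_p\mc{M}) = 0$ and (ii) global generation of $\operatorname{gr}^F\mc{M}$ is the natural shape, and (i) via Saito's Kodaira vanishing plus a reduction along partial flag fibrations is indeed roughly how this goes in the literature. The weak point is (ii). Neither of your suggested routes---induction on $\dim\operatorname{supp}$ via the stratification by intermediate extensions, or a strictly filtered resolution by standard modules---actually produces global generation: global generation is not stable under the exact sequences arising from a stratification (the middle term of a short exact sequence of coherent sheaves need not be globally generated even if the outer terms are, and conversely), and there is no a priori reason a mixed Hodge module admits a \emph{strictly} filtered surjection from a standard object whose Hodge filtration you already control. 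In the actual proof, (i) and (ii) are not decoupled in this way; the vanishing is proved first in a strong enough form (for all dominant twists, including by nef but non-ample bundles) that global generation of $\operatorname{gr}^F\mc{M}$ can be deduced from it together with the hypothesis that $\mc{M}$ itself is globally generated as a $\mc{D}_\lambda$-module, rather than established by an independent structural induction. Your outline correctly identifies (ii) as the delicate step, but the mechanisms you propose for it would not close the argument.
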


Since the sheaves $F_p\mc{D}_\lambda$, being quotients of $F_p U(\mf{g}) \otimes \mc{O}_\mc{B}$, are themselves globally generated as $\mc{O}_{\mc{B}}$-modules, Theorem \ref{thm:global generation} implies that each piece $F_p\mc{M}$ is globally generated as an $\mc{O}_\mc{B}$-module.

It is a basic fact, which follows from the classification in terms of local systems on $K$-orbits, that every irreducible $\mc{M} \in \Mod(\mc{D}_\lambda, K)$ lifts to uniquely (up to tensoring with a $1$-dimensional Hodge structure) to an object in $\mhm(\mc{D}_\lambda, K)$. In particular, every such $\mc{M}$ is equipped with a Hodge filtration $F_\bullet \mc{M}$, well-defined up to a shift in indexing. This induces a Hodge filtration $F_\bullet M := \Gamma(F_\bullet \mc{M})$ on the $(\mf{g}, K)$-module $M = \Gamma(\mc{M})$. Since every irreducible $(\mf{g}, K)$-module $M$ with real infinitesimal character is uniquely of this form for some $\lambda \in \mf{h}^*_\mb{R}$ dominant and $\mc{M} \in \mhm(\mc{D}_\lambda, K)$ irreducible, this gives a canonical (up to shift) Hodge filtration on every such $M$.

The key feature of the Hodge filtration $F_\bullet M$ is that it can be used to determine the unitarity of $M$. Recall the following fact, originally due to Adams, van Leeuwen, Trapa and Vogan \cite{ALTV}; for the statement, we say that an irreducible $(\mf{g}, K)$-module is \emph{Hermitian} if it admits a non-degenerate $(\mf{g}_\mb{R}, K_\mb{R})$-invariant Hermitian form.

\begin{lem}
Let $M$ be an irreducible $(\mf{g}, K)$-module with real infinitesimal character. Then $M$ is Hermitian if and only if there exists a $K$-equivariant involution $\theta \colon M \to M$ such that
\[ \theta(x\cdot m) = \theta(x) \cdot \theta(m) \quad \text{for $x \in \mf{g}$ and $m \in M$},\]
where $\theta \colon \mf{g} \to \mf{g}$ is the derivative of the Cartan involution on $G$.
\end{lem}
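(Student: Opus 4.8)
The plan is to reduce everything to the existence of a single auxiliary form and then argue by Schur's lemma. Write $T \colon \fg \to \fg$... no: write $\theta$ for the Cartan involution of $\fg$ (so $\theta = +1$ on $\fk$, $\theta = -1$ on $\fp$), and let $\theta_c = \sigma\theta$ be the associated compact conjugation, where $\sigma$ is the conjugation cutting out $\fg_\RR$. The key input, and the only place the hypothesis of real infinitesimal character will be used, is the theorem of Adams--van Leeuwen--Trapa--Vogan that every irreducible $(\fg,K)$-module $M$ with real infinitesimal character carries a nonzero $\theta_c$-invariant Hermitian form $\langle\cdot,\cdot\rangle_c$, which is automatically non-degenerate (as $M$ is irreducible). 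Concretely, $\langle\cdot,\cdot\rangle_c$ is $K_\RR$-invariant, $\fk_\RR$ acts skew-adjointly and $\fp_\RR$ acts self-adjointly; equivalently, for $Y \in \fg_\RR$ the $\langle\cdot,\cdot\rangle_c$-adjoint satisfies $Y^\dagger = -\theta(Y)$. I would take this as a black box, with a pointer to the relevant statement in \cite{ALTV} (or \cite{DV}); everything after it is formal.

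For the ``if'' direction, suppose we are given a $K$-equivariant $T \colon M \to M$ with $T^2 = \mathrm{id}$ and $T(x\cdot m) = \theta(x)\cdot T(m)$. I would set $B(m,n) := \langle Tm, n\rangle_c$. This is sesquilinear and non-degenerate ($T$ is invertible), it is $K_\RR$-invariant since $T$ is $K$-equivariant and $\langle\cdot,\cdot\rangle_c$ is $K_\RR$-invariant, and it is $\fg_\RR$-invariant by the one-line computation $\langle T(Ym),n\rangle_c + \langle Tm, Yn\rangle_c = \langle(\theta(Y) + Y^\dagger)Tm, n\rangle_c = 0$ for $Y\in\fg_\RR$. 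It remains to make $B$ Hermitian-symmetric. Since $M$ is irreducible, so is the pullback $\theta^*M$ of $M$ along $\theta$ (note $\theta$ acts trivially on $K = G^\theta$), so the space of $\theta$-twisted intertwiners $M \to M$, i.e.\ $\Hom_{(\fg,K)}(M,\theta^*M)$, is one-dimensional (it contains $T \neq 0$). One checks $T^\dagger$ is again a $\theta$-twisted intertwiner, so $T^\dagger = \nu T$ for some $\nu \in \C^\times$, and $(T^\dagger)^2 = (T^2)^\dagger = \mathrm{id}$ forces $\nu^2 = 1$. Hence $B$ (if $\nu = 1$) or $iB$ (if $\nu = -1$) is a non-degenerate $(\fg_\RR,K_\RR)$-invariant Hermitian form, so $M$ is Hermitian.

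For the ``only if'' direction, let $\langle\cdot,\cdot\rangle_\sigma$ be a non-degenerate $(\fg_\RR,K_\RR)$-invariant Hermitian form and, using non-degeneracy of $\langle\cdot,\cdot\rangle_c$, write $\langle m,n\rangle_\sigma = \langle Am, n\rangle_c$ for some $A \in \mathrm{End}_\C(M)$. Both forms are $K_\RR$-invariant, so $A$ commutes with $K$; both are non-degenerate, so $A$ is invertible. Expanding $\fg_\RR$-invariance of $\langle\cdot,\cdot\rangle_\sigma$ against the known adjoint behaviour of $\langle\cdot,\cdot\rangle_c$ yields $[A,x] = 0$ for $x\in\fk_\RR$ and $Ax + xA = 0$ for $x\in\fp_\RR$; by $\C$-linearity these hold for all of $\fk$ and $\fp$, which is exactly the statement that $A(x\cdot m) = \theta(x)\cdot A(m)$. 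Then $A^2$ commutes with both $\fk$ and $\fp$, hence lies in $\mathrm{End}_{(\fg,K)}(M) = \C$ by Schur's lemma; writing $A^2 = c$ (with $c \neq 0$) and $T := c^{-1/2}A$ for any square root, the operator $T$ is $K$-equivariant, $\theta$-twisted, and $T^2 = \mathrm{id}$.

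The only non-formal ingredient, and hence the main obstacle, is the existence of the $\theta_c$-invariant form $\langle\cdot,\cdot\rangle_c$ for modules of real infinitesimal character; this is precisely where the real-infinitesimal-character hypothesis enters, and it is a genuine theorem (equivalently: $M$ is isomorphic to its $\theta_c$-Hermitian dual, which fails for general infinitesimal character). Granting that, the rest is a careful but routine bookkeeping of adjoints together with two applications of Schur's lemma; the only real pitfalls are the sign and conjugation conventions in the definition of $Y^\dagger$ and in the relation between $\theta$ and $\theta_c$.
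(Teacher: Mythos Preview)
The paper does not actually prove this lemma: it is stated as a fact attributed to \cite{ALTV}, with no argument given. Your proposal is correct and supplies precisely the standard reduction, namely the equivalence between a $\theta$-intertwining involution on $M$ and a $(\fg_\RR,K_\RR)$-invariant Hermitian form, mediated by the $c$-invariant form of \cite{ALTV}. You rightly flag that the only non-formal input is the existence of the $c$-form for real infinitesimal character (equivalently, $M$ being isomorphic to its $c$-Hermitian dual), which is exactly the content the paper is citing; the Schur-lemma bookkeeping in both directions, the verification that $T^\dagger$ is again a $\theta$-twisted intertwiner, and the normalization $T=c^{-1/2}A$ all go through as written.
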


The Cartan involution on an irreducible Hermitian $(\mf{g}, K)$-module is of course unique up to sign. Geometrically, it comes from an isomorphism
\[ \theta^*\mc{M} \cong \mc{M},\]
where $\theta^*\mc{M}$ is the pullback of $\mc{M}$ along $\theta \colon \mc{B} \to \mc{B}$. Note that for $\mc{M} \in \Mod(\mc{D}_\lambda, K)$, we have $\theta^*\mc{M} \in \Mod(\mc{D}_{\delta\lambda}, K)$, where $\delta \colon H \to H$ is the automorphism of the based root datum induced by $\theta^* \colon \mrm{Pic}^G(\mc{B}) \to \mrm{Pic}^G(\mc{B})$ and the isomorphism \eqref{eq:pic}. In particular, we must have $\delta \lambda = \lambda$ if $M$ is Hermitian. Since the Hodge filtration is uniquely determined by the underlying $\mc{D}_\lambda$-module and the index of its first non-zero piece, it follows that $\theta$ preserves the Hodge filtration on $\mc{M}$ and hence $M$.

The main theorem of \cite{DV} concerning unitarity is:

\begin{thm}[{\cite[Theorem 1.8]{DV}}] \label{thm:unitarity criterion}
Assume that $M$ is an irreducible Hermitian $(\mf{g}, K)$-module with real infinitesimal character and Cartan involution $\theta$. Then $M$ is unitary if and only if $\pm \theta$ acts on $\Gr^{F}_p M$ by $(-1)^p$ for all $p$.
\end{thm}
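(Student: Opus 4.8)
The plan is to realize $M$ geometrically, to extract its invariant Hermitian form from the polarization of the underlying pure Hodge module, and then to use the global generation Theorem~\ref{thm:global generation} to transfer the signature computation to the associated graded, where the Hodge--Riemann bilinear relations force the alternating signs. Concretely, I would first fix $\lambda \in \mf{h}^*_\mb{R}$ dominant and the unique irreducible lift $\mc{M} \in \mhm(\mc{D}_\lambda, K)$ with $\Gamma(\mc{M}) = M$. As the intermediate extension of an irreducible twisted local system on a $K$-orbit, $\mc{M}$ underlies a \emph{polarizable} pure Hodge module; fix a polarization $S$. Since $\lambda$ is dominant, $\Gamma$ is exact on $\mhm(\mc{D}_\lambda, K)$. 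Because $M$ is irreducible and Hermitian, its $(\mf{g}_\mb{R}, K_\mb{R})$-invariant form $\langle\,,\,\rangle$ is unique up to a real scalar, and $M$ is unitary exactly when $\langle\,,\,\rangle$ is definite; the whole argument is a comparison of $\langle\,,\,\rangle$ with a form manufactured from $S$.

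The second step is the construction of this Hodge-theoretic form. The Cartan involution yields a geometric isomorphism $\theta^*\mc{M} \cong \mc{M}$ (forced because $\delta\lambda = \lambda$), and combining it with $S$ and Serre/$\mc{D}$-module duality on the proper variety $\mc{B}$ produces a non-degenerate $(\mf{g}, K)$-invariant sesquilinear pairing $\langle\,,\,\rangle_S$ on $M$. By the unitarian trick this pairing is invariant not for $\mf{g}_\mb{R}$ but for its $\theta$-twist --- it is the ``$c$-invariant form'' of \cite{ALTV} up to scalar --- so $\langle \theta\,\cdot\,, \,\cdot\,\rangle_S$ is genuinely $(\mf{g}_\mb{R}, K_\mb{R})$-invariant and hence equals $c\,\langle\,,\,\rangle$ for a unique $c \in \mb{R}^\times$. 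After rescaling $S$ I may take $c = \pm 1$, and this (together with the sign ambiguity of $\theta$ itself) accounts for the ``$\pm$'' in the statement.

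The heart of the argument is the signature computation. Using Theorem~\ref{thm:global generation}, the Hodge filtration $F_\bullet\mc{M}$ is globally generated, and combined with Serre-type vanishing for the globally generated coherent sheaves $F_p\mc{M}$ this gives $H^{>0}(\mc{B}, F_p\mc{M}) = 0$, so that $\Gamma$ is exact on the associated graded and $\Gr^F_p M = \Gamma(\mc{B}, \Gr^F_p\mc{M})$. On the sheaf level $S$ makes $F_p$ pair trivially with a complementary step of $F_\bullet\bar{\mc{M}}$, inducing a perfect pairing on graded pieces; the Hodge--Riemann relations, via the Weil operator acting as $i^{p-q}$ on the $p$-th graded piece of weight $w$, say that the Hermitian form induced by $S$ on each $\Gr^F_p M$ is $(-1)^p$-definite up to the global sign $c$. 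Finally I would unwind $\langle \theta\,\cdot\,, \,\cdot\,\rangle_S = c\,\langle\,,\,\rangle$: the operator $\theta$ preserves $F_\bullet M$ and, being an involution, acts on each finite-dimensional $\Gr^F_p M$ with eigenvalues $\pm 1$; the form $\langle\,,\,\rangle$ is definite on $M$ if and only if it is definite on every $F_pM$, which after the above translation happens if and only if $\theta$ acts by the single scalar $c\,(-1)^p$ on $\Gr^F_p M$ for every $p$ --- that is, if and only if $\pm\theta$ acts by $(-1)^p$ on $\Gr^F_p M$ for all $p$.

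The step I expect to be the main obstacle is ensuring that the three structures --- the global sections functor $\Gamma$, the Hodge filtration $F_\bullet$, and the polarization $S$ --- are compatible simultaneously: one must know that passing to $\Gamma$ on the associated graded neither destroys the Hodge--Riemann positivity nor introduces sign cancellations, and this is precisely where the global generation Theorem~\ref{thm:global generation} is indispensable, since it is what makes $\Gamma$ exact on $\Gr^F$ and propagates the positivity from the sheaf to its global sections. A secondary subtlety is the precise identification of $\langle\,,\,\rangle_S$ with the $c$-invariant form of \cite{ALTV} and the bookkeeping of the indexing shift in $F_\bullet\mc{M}$, which together pin down the meaning of the ``$\pm$''.
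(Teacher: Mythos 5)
The paper does not prove this theorem; it quotes it verbatim as \cite[Theorem 1.8]{DV}, and the contribution of the present paper is to \emph{apply} it together with the global generation Theorem~\ref{thm:global generation}. So there is no internal proof to compare against. Read as a reconstruction of the argument in \cite{DV}, your outline --- realize $M$ as $\Gamma(\mc{M})$ for a polarized Hodge module $\mc{M}$, compare the form induced by the polarization with the $c$-invariant form of \cite{ALTV}, transfer sign alternation from the sheaf-level polarization to global sections, and convert this into the $\theta$-eigenvalue condition via the ALTV criterion --- is the right circle of ideas, and your bookkeeping of the $\pm$ ambiguity and the indexing shift is essentially correct.

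There is, however, a genuine gap at the crucial middle step. You claim that global generation of $F_p\mc{M}$, ``combined with Serre-type vanishing for the globally generated coherent sheaves $F_p\mc{M}$,'' gives $H^{>0}(\mc{B}, F_p\mc{M}) = 0$. This inference is false: global generation of a coherent sheaf on a projective variety does not imply vanishing of higher cohomology ($\mc{O}_X$ is globally generated on any projective $X$, yet $H^1(X,\mc{O}_X) \ne 0$ whenever $X$ has positive irregularity). The cohomology vanishing at dominant $\lambda$ --- which is precisely what you need to identify $\Gr^F_p M$ with $\Gamma(\Gr^F_p\mc{M})$ and hence to carry the pointwise Hodge--Riemann positivity of the polarization through to a definiteness statement about $M$ --- is a separate theorem in \cite{DV}, and is arguably the technical heart of that paper, not a formal corollary of the global generation statement quoted here as Theorem~\ref{thm:global generation}. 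By labelling it a ``Serre-type'' formality you have elided the main difficulty of the argument. A secondary imprecision: the Hodge--Riemann relations in Saito's theory are statements about the polarization on fibers, indexed by weight, and the passage to the sign $(-1)^p$ on the abstractly graded module $\Gr^F_p M$ already presupposes both the vanishing theorem and the normalization of the lowest Hodge piece; it is not a one-line unwinding of ``Weil operator acts by $i^{p-q}$.''
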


\section{Proof of the FPP Conjecture}

In this section, we prove the following theorem, which implies the FPP conjecture and clarifies its relation to Hodge theory.

\begin{thm} \label{thm:body FPP}
Fix $\lambda \in \mf{h}^*_\mb{R}$ dominant and let $M$ be an irreducible Hermitian $(\mf{g}, K)$-module with real infinitesimal character $\chi_\lambda$. Let $\theta \colon M \to M$ be a Cartan involution and $F_\bullet M$ the Hodge filtration. Assume that 
\begin{enumerate}
\item there exists a simple coroot $\check\alpha$ such that $\langle \lambda, \check\alpha \rangle > 1$ and
\item $M$ is not cohomologically induced in the weakly good range from a proper Levi subgroup.
\end{enumerate}
Then $\theta$ does not act by a scalar on the lowest non-zero graded piece of $\Gr^{F}M$ and hence $M$ is not unitary.
\end{thm}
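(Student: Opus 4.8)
The plan is to pass through Beilinson--Bernstein localization, apply the global generation theorem for Hodge filtrations (Theorem \ref{thm:global generation}), and then extract from hypotheses (1) and (2) a contradiction with Theorem \ref{thm:unitarity criterion} using the geometry of the $K$-orbit supporting $M$. First, write $M = \Gamma(\mc{M})$ for the irreducible $\mc{M} \in \mhm(\mc{D}_\lambda, K)$ with supporting $K$-orbit $j \colon \mb{O} \to \mc{B}$, so $\mc{M} = j_{!*}\gamma$. Since $\lambda$ is dominant and $M \neq 0$, the module $\mc{M}$ is generated by its global sections over $\mc{D}_\lambda$ (a standard consequence of Beilinson--Bernstein), so Theorem \ref{thm:global generation} applies. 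Taking the lowest index $p_0$ with $F_{p_0}\mc{M} \neq 0$ and using $F_0\mc{D}_\lambda = \mc{O}_\mc{B}$, we conclude that $\mc{F} := F_{p_0}\mc{M}$ is generated by global sections as an $\mc{O}_\mc{B}$-module; in particular $\Gamma(\mc{B}, \mc{F}) \neq 0$, so (as $F_p M := \Gamma(F_p\mc{M})$) the space $\Gr^F_{p_0}M = \Gamma(\mc{B}, \mc{F})$ is precisely the lowest nonzero graded piece of $\Gr^F M$. Finally, the Cartan involution is realized by an isomorphism $\phi \colon \theta^*\mc{M} \xrightarrow{\sim} \mc{M}$ respecting the Hodge filtration, hence restricting to $\phi \colon \theta^*\mc{F} \xrightarrow{\sim} \mc{F}$ and inducing the involution $\theta$ on $\Gamma(\mc{B}, \mc{F}) = \Gr^F_{p_0}M$.

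Suppose, toward a contradiction with Theorem \ref{thm:unitarity criterion}, that $\theta$ acts on $\Gr^F_{p_0}M$ by a scalar $\epsilon = \pm 1$. Replacing $\phi$ by $\epsilon\phi$ (still an involutive isomorphism, since $\epsilon^2 = 1 = \theta^2$), we may assume $\theta$ acts trivially on $\Gamma(\mc{B}, \mc{F})$. As $\mc{F}$ is globally generated, the surjection $\Gamma(\mc{B}, \mc{F}) \otimes_\C \mc{O}_\mc{B} \to \mc{F}$ is then equivariant for the $\theta$-action that is trivial on the first factor; in particular $\theta$ acts trivially on the fibre $\mc{F}(p)$ at every $\theta$-fixed point $p \in \overline{\mb{O}}$. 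To bring hypothesis (1) into play, I would twist: since $\langle\lambda, \check\alpha\rangle > 1$, the weight $\lambda' := \lambda - \varpi_\alpha$ is again dominant, and $\lambda$ and $\lambda'$ have exactly the same singular simple coroots; hence $\Gamma(\mc{M} \otimes \mc{O}(-\varpi_\alpha)) \neq 0$, and Theorem \ref{thm:global generation} applies to the irreducible module $\mc{M} \otimes \mc{O}(-\varpi_\alpha) \in \mhm(\mc{D}_{\lambda'}, K)$ as well. Since tensoring by a line bundle does not shift the Hodge filtration, $\mc{F} \otimes \mc{O}(-\varpi_\alpha)$ is also globally generated, i.e.\ the map $\Gamma(\mc{B}, \mc{F} \otimes \mc{O}(-\varpi_\alpha)) \otimes_\C \mc{O}(\varpi_\alpha) \to \mc{F}$ is surjective. (If $\varpi_\alpha \notin \mb{X}^*(H)$, one first replaces $K$ by its preimage in a cover of $G$ with simply connected derived subgroup, which changes neither the $K$-orbit geometry on $\mc{B}$ nor the validity of Theorem \ref{thm:global generation}; and if $\delta\alpha \neq \alpha$ one works throughout with the $\theta$-stable partial flag variety and dominant weight attached to the entire $\delta$-orbit of $\alpha$.)

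The remaining task --- which I expect to be the main obstacle --- is to combine these two global generation statements and the $\theta$-triviality of $\mc{F}$ at fixed points with the geometry of $\mb{O}$ so as to contradict hypothesis (2). By Proposition \ref{prop:coh induction orbit}, hypothesis (2) asserts that for every proper $\theta$-stable partial flag variety $\pi \colon \mc{B} \to \mc{P}$ the $K$-orbit $\pi(\mb{O}) \subset \mc{P}$ contains no $\theta$-fixed point (equivalently, no $\theta$-stable parabolic of the given type). I would specialize to $\pi = \pi_\alpha \colon \mc{B} \to \mc{P}_\alpha$, the projection to the partial flag variety of type $\{\alpha\}$, which is a $\mb{P}^1$-bundle on whose fibres $\mc{O}(\varpi_\alpha)$ restricts to $\mc{O}_{\mb{P}^1}(1)$. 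Restricting $\mc{F}$ to a fibre $\mb{P}^1_y$: the twisted global generation realizes $\mc{F}|_{\mb{P}^1_y}$, when nonzero, as a quotient of $\mc{O}_{\mb{P}^1}(1)^{\oplus N}$ --- forcing all of its locally free Grothendieck summands to have degree $\geq 1$ --- while ordinary global generation forces them to have degree $\geq 0$; moreover, over any $\theta$-fixed point of $\mc{P}_\alpha$ the induced $\theta$-action on the corresponding fibre of $\mc{F}$ is trivial. Confronting this with the standard trichotomy for the position of a $K$-orbit relative to $\pi_\alpha$ ($\pi_\alpha|_{\mb{O}}$ is an isomorphism onto its image, has one-dimensional affine fibres, or is a $\mb{P}^1$-bundle) and with the structure of the lowest Hodge piece $F_{p_0}(j_{!*}\gamma)$ along the boundary of $\mb{O}$, the goal is to conclude that $\pi_\alpha(\mb{O})$ --- or $\pi(\mb{O})$ for some larger proper $\theta$-stable $\mc{P}$ --- must contain a $\theta$-fixed point, contradicting (2) and hence Theorem \ref{thm:unitarity criterion}. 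The delicate point is exactly this last geometric step: reconciling the degree bounds on $\mb{P}^1$-fibres supplied by Hodge theory with the constraint that $\mb{O}$ avoids every $\theta$-stable parabolic. It is here that the strict inequality $\langle\lambda, \check\alpha\rangle > 1$ --- precisely what makes $\lambda - \varpi_\alpha$ dominant, so that the second global generation statement is available --- is essential.
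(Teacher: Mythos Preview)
Your proposal correctly identifies almost all of the ingredients the paper uses --- localizing to $\mc{M}$, global generation of the lowest Hodge piece $\mc{F}=F_{p_0}\mc{M}$ via Theorem~\ref{thm:global generation}, the twist by $\mc{O}(-\varpi_\alpha)$ made available by $\langle\lambda,\check\alpha\rangle>1$, the passage to a simply connected cover, and the need to use $\varpi_\alpha+\varpi_{\delta\alpha}$ when $\delta\alpha\neq\alpha$ --- but the proof is genuinely incomplete at the step you yourself flag as ``the main obstacle.'' The proposed endgame (degree bounds on $\mb{P}^1$-fibres, the orbit trichotomy, and the $\theta$-triviality of $\mc{F}$ at fixed points of $\overline{\mb{O}}$) does not visibly lead to the conclusion that $\pi(\mb{O})$ must contain a $\theta$-fixed point. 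Indeed, under hypothesis (2) there may be no $\theta$-fixed points in $\overline{\mb{O}}$ at all, so the observation that $\theta$ acts trivially on $\mc{F}$ at such points has no traction; and the degree bounds on $\mb{P}^1$-fibres say nothing about the $\theta$-action.

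The paper's argument avoids this by going in the opposite direction: rather than trying to contradict (2), it uses (2) constructively. Set $\mu=\varpi_\alpha$ (or $\varpi_\alpha+\varpi_{\delta\alpha}$) and let $\pi\colon\mc{B}\to\mc{P}$ be the map to the $\theta$-stable partial flag variety on which $\mc{O}(\mu)$ is \emph{very ample}. By Proposition~\ref{prop:coh induction orbit}, hypothesis (2) gives $\pi(x)\neq\theta\pi(x)$ for $x\in\mb{O}$; very ampleness then produces $s\in\Gamma(\mc{O}(\mu))$ with $s(x)\neq 0=s(\theta x)$, and $s_\pm=(1\pm\theta)s$ are $\theta$-eigenvectors of opposite sign, both nonvanishing at $x$. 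On the other hand, Lemma~\ref{lem:generation} plus Theorem~\ref{thm:global generation} give a $\theta$-eigenvector $v\in\Gamma(\mc{F}\otimes\mc{O}(-\mu))$ with $v_x\neq 0$. Then $v\otimes s_+$ and $v\otimes s_-$ lie in $\Gamma(\mc{F})=\Gr^F_{p_0}M$, are nonzero (since nonzero at $x$), and have opposite $\theta$-eigenvalues --- directly exhibiting the failure of $\theta$ to act by a scalar. The missing idea in your sketch is precisely this: use the very ample line bundle on $\mc{P}$ to separate $x$ from $\theta x$ and thereby \emph{build} sections of both signs, rather than trying to force the orbit against a fixed locus.
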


We first make the following simple reduction. Let $G^{sc}$ be the universal cover of the derived group $[G, G]$, and set $G_\mb{R}^{sc} = G^{sc} \times_G G_\mb{R}$. Then any irreducible (unitary) representation $M$ of $G_\mb{R}$ restricts to a semi-simple (unitary) representation of $G_\mb{R}^{sc}$, whose constituents are cohomologically induced if and only if $M$ is. So we may assume for the sake of convenience that $G$ is simply connected; we will do so from now on. This ensures that the fundamental weights $\varpi_\alpha \in \mf{h}^*_\mb{R}$ all lie in the character lattice $\mb{X}^*(H)$.

\begin{lem} \label{lem:generation}
Let $\lambda \in \mf{h}^*_\mb{R}$ be dominant, let $\mc{M}$ be a $\mc{D}_\lambda$-module on $\mc{B}$ and let $\alpha$ be a simple root with corresponding fundamental weight $\varpi_\alpha$. If $\mc{M}$ is globally generated and $\langle \lambda, \check\alpha\rangle > 1$ then the $\mc{D}_{\lambda - \varpi_\alpha}$-module $\mc{M} \otimes \mc{O}(-\varpi_\alpha)$ is also globally generated. 
\end{lem}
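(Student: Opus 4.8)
The plan is to reduce the statement to a cohomology vanishing fact on the flag variety, using the standard "induction on the root hyperplanes" technique familiar from the theory of twisted $\mc{D}$-modules, but controlling it by the single inequality $\langle \lambda, \check\alpha \rangle > 1$. First I would recall the characterization: a $\mc{D}_\mu$-module $\mc{N}$ on $\mc{B}$ is globally generated as soon as $H^1(\mc{B}, \mc{K}) = 0$ for $\mc{K}$ the kernel of $\Gamma(\mc{B},\mc{N}) \otimes_{\C} \mc{O}_\mc{B} \twoheadrightarrow \mc{N}$ — more precisely, global generation propagates along surjections, so it suffices to produce a globally generated $\mc{D}_{\lambda - \varpi_\alpha}$-module surjecting onto $\mc{M} \otimes \mc{O}(-\varpi_\alpha)$. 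The natural candidate is to push $\mc{M}$ through the geometry of the minimal parabolic $P_\alpha \supset B$ attached to $\alpha$: let $p_\alpha \colon \mc{B} \to \mc{P}_\alpha$ be the projection to the corresponding partial flag variety, a $\mb{P}^1$-bundle. The twist $\mc{O}(-\varpi_\alpha)$ is trivial on $\mc{P}_\alpha$ and restricts to $\mc{O}_{\mb{P}^1}(-1)$ on each fiber; the condition $\langle \lambda, \check\alpha \rangle > 1$ says that $\mc{M}$, restricted to a generic fiber $\mb{P}^1$, is a $\mc{D}$-module on $\mb{P}^1$ with the twist parameter $\langle \lambda, \check\alpha\rangle$ strictly greater than $1$.

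The key step, then, is the fiberwise computation. Over $\mc{P}_\alpha$, consider the relative global sections: I would show that $\mc{M} \otimes \mc{O}(-\varpi_\alpha)$ is generated by $p_\alpha^* (p_{\alpha*}(\mc{M}\otimes\mc{O}(-\varpi_\alpha)))$, equivalently that on each fiber $\mb{P}^1$ the twisted $\mc{D}$-module $\mc{M}|_{\mb{P}^1} \otimes \mc{O}_{\mb{P}^1}(-1)$ is globally generated. A globally generated $\mc{D}_c$-module on $\mb{P}^1$ with twist parameter $c$ (normalized so that $\mc{O}_{\mb{P}^1}$-coherent $\mc{D}_c$-modules are globally generated once $c \geq 0$, and $\mc{D}_c$-modules in general once $c > 0$, failing only the "strictly negative integer" locus) stays globally generated after tensoring by $\mc{O}_{\mb{P}^1}(-1)$ precisely when $c - 1 \not\in \mb{Z}_{<0}$, i.e. when $c > 1$ or $c \leq 0$; since $\mc{M}$ is globally generated and $\lambda$ is dominant we are in the range $c = \langle\lambda,\check\alpha\rangle \geq 0$, and the hypothesis $c > 1$ puts us in the good range. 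This is the one-variable Beilinson--Bernstein vanishing on $\mb{P}^1 = \mc{B}_{SL_2}$. Combining: $p_\alpha^* p_{\alpha*}(\mc{M}\otimes\mc{O}(-\varpi_\alpha)) \twoheadrightarrow \mc{M}\otimes\mc{O}(-\varpi_\alpha)$, and the source, being a pullback of a $\mc{D}_{\mc{P}_\alpha}$-module that is globally generated (as $\mc{P}_\alpha$ is a smaller flag variety with the appropriate dominant parameter, or directly because $p_{\alpha*}\mc{M}$ is globally generated — here one uses that the twist on the base is dominant and that $p_{\alpha*}$ is exact on the relevant modules by the fiberwise vanishing), is globally generated.

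The main obstacle I expect is bookkeeping the $\rho$-shifts and normalizations so that "$\langle\lambda,\check\alpha\rangle > 1$" really corresponds to "twist parameter on the $\mb{P}^1$-fiber lies strictly above the integer $1$ that obstructs tensoring with $\mc{O}(-1)$". Concretely one must check: (a) the sheaf of twisted differential operators $\mc{D}_\lambda$ restricted to a fiber of $p_\alpha$ is $\mc{D}_{s_\alpha}$ for the parameter $s_\alpha$ determined by pairing $\lambda$ (or $\lambda$ minus the $\mc{P}_\alpha$-part) with $\check\alpha$, with the correct half-integer shift; (b) the relative canonical bundle of $p_\alpha$ contributes the expected $\mc{O}(-1)$ on fibers, matching $\mc{O}(-\varpi_\alpha)$; and (c) that $R^1 p_{\alpha *}$ of the relevant sheaves vanishes, which again reduces to $H^1(\mb{P}^1, -)$ of a globally generated (hence $H^1$-acyclic in this twist range) sheaf. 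Once (a)--(c) are pinned down the argument is the standard "propagate global generation down a $\mb{P}^1$-bundle" and presents no further difficulty; alternatively, one can bypass the relative geometry entirely and argue directly from Theorem \ref{thm:global generation} applied with $\lambda$ replaced by $\lambda$, noting that $\mc{M}\otimes\mc{O}(-\varpi_\alpha)$ has dominant parameter $\lambda - \varpi_\alpha$ on all coroots except possibly $\check\alpha$, where the value is $\langle\lambda,\check\alpha\rangle - 1 > 0$, so $\lambda - \varpi_\alpha$ is in fact dominant and integrally dominant and the result follows from Beilinson--Bernstein exactness of $\Gamma$ together with surjectivity of $\Gamma(\mc{D}_{\lambda-\varpi_\alpha})\otimes\Gamma(\mc{M}\otimes\mc{O}(-\varpi_\alpha)) \to \Gamma$-generation — this second route is likely the cleanest and is the one I would write up.
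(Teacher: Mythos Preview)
Your preferred ``second route'' has a genuine gap. You correctly observe that $\lambda - \varpi_\alpha$ is dominant: $\langle \lambda - \varpi_\alpha, \check\beta\rangle = \langle \lambda, \check\beta\rangle \geq 0$ for simple $\beta \neq \alpha$ and $\langle \lambda - \varpi_\alpha, \check\alpha\rangle > 0$. But dominance (or integral dominance) of the parameter only gives \emph{exactness} of $\Gamma$ in Beilinson--Bernstein; global generation of every $\mc{D}_{\lambda - \varpi_\alpha}$-module requires regularity, and nothing in the hypotheses rules out $\langle \lambda, \check\beta\rangle = 0$ for some simple $\beta \neq \alpha$. At such a singular parameter there certainly exist $\mc{D}_{\lambda - \varpi_\alpha}$-modules that are not globally generated (any irreducible with $\Gamma = 0$), so you cannot conclude without actually using that $\mc{M}$ is globally generated. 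Invoking Theorem~\ref{thm:global generation} does not help: that statement is about Hodge filtrations of mixed Hodge modules and already \emph{assumes} the underlying $\mc{D}$-module is globally generated; the lemma here is for arbitrary $\mc{D}_\lambda$-modules. Your first route via the $\mb{P}^1$-bundle $p_\alpha$ has an analogous hole on the base: $\mc{P}_\alpha$ is a partial flag variety, not a ``smaller flag variety'', so there is no off-the-shelf statement giving global generation of $p_{\alpha*}(\mc{M}\otimes\mc{O}(-\varpi_\alpha))$, and an inductive argument over all simple roots would require the inequality $\langle\lambda,\check\beta\rangle>1$ for every $\beta$, which you do not have.

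The paper's argument is different and uses the hypothesis on $\mc{M}$ via the Beilinson--Bernstein tensor trick. With $L(\varpi_\alpha) = \Gamma(\mc{O}(\varpi_\alpha))$, the tautological surjection $\mc{M}\otimes\mc{O}(-\varpi_\alpha)\otimes L(\varpi_\alpha)\twoheadrightarrow \mc{M}$ has kernel filtered, as a sheaf of $U(\mf{g})$-modules, by pieces $\mc{M}\otimes\mc{O}(\mu-\varpi_\alpha)$ for weights $\mu\neq\varpi_\alpha$ of $L(\varpi_\alpha)$, each of the form $\varpi_\alpha - \alpha - \sum n_i\alpha_i$ with $n_i\geq 0$. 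The key point is that $\langle\lambda,\check\alpha\rangle>1$ gives $(\lambda-\varpi_\alpha,\alpha)>0$, and a short norm computation then yields $\|\lambda+\mu-\varpi_\alpha\|<\|\lambda\|$ for all such $\mu$, so the central characters separate and the surjection splits over $\mf{Z}(\mf{g})$. Hence $\Gamma(\mc{M}\otimes\mc{O}(-\varpi_\alpha))\otimes L(\varpi_\alpha)\to\Gamma(\mc{M})$ is onto; combined with global generation of $\mc{M}$ this gives surjectivity of $\Gamma(\mc{M}\otimes\mc{O}(-\varpi_\alpha))\otimes\mc{O}(\varpi_\alpha)\to\mc{M}$, and twisting by $\mc{O}(-\varpi_\alpha)$ finishes.
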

\begin{proof}
The proof is a very slight modification of Beilinson and Bernstein's proof of global generation in the regular case.

Let $L(\varpi_\alpha) = \Gamma(\mc{B}, \mc{O}(\varpi_\alpha))$ be the irreducible algebraic representation of $G$ with highest weight $\varpi_\alpha$. Consider the tautological surjection
\begin{equation} \label{eq:generation 1}
\mc{M} \otimes \mc{O}(-\varpi_\alpha) \otimes L(\varpi_\alpha) \to \mc{M} \otimes \mc{O}(-\varpi_\alpha) \otimes \mc{O}(\varpi_\alpha) = \mc{M}.
\end{equation}
We claim that \eqref{eq:generation 1} splits. To see this, let $\mc{K} = \ker(\mc{O}_\mc{B} \otimes L(\varpi_\alpha) \to \mc{O}(\varpi_\alpha))$ and recall that $\mc{K}$ has a $G$-equivariant filtration by line bundles of the form $\mc{O}(\mu)$ where $\mu \neq \varpi_\alpha$ is a weight of $L(\varpi_\alpha)$. Thus, the kernel $\mc{M} \otimes \mc{O}(-\varpi_\alpha) \otimes \mc{K}$ of \eqref{eq:generation 1} has a filtration, as a sheaf of $U(\mf{g})$-modules, by subquotients of the form
\[ \mc{M} \otimes \mc{O}(\mu - \varpi_\alpha) \quad \text{for $\mu\neq \varpi_\alpha$ a weight of $L(\varpi_\alpha)$.}\]
Now, the center $\mf{Z}(\mf{g}) \subset U(\mf{g})$ acts on $\mc{M} \otimes \mc{O}(\mu - \varpi_\alpha)$ via the character $\chi_{\lambda + \mu - \varpi_\alpha}$ and on $\mc{M}$ via $\chi_{\lambda}$, so to show that \eqref{eq:generation 1} splits, it suffices to show that
\[ \chi_{\lambda + \mu - \varpi_\alpha} \neq \chi_\lambda  \quad \text{(i.e., $\lambda + \mu -\varpi_\alpha \not\in W\lambda$)}\]
for all such $\mu$. Now, we may write
\[ \mu = \varpi_\alpha - \alpha - \sum_i n_i \alpha_i \]
where $n_i \geq 0$ and the $\alpha_i$ are simple roots. By assumption on $\lambda$, $\lambda - \varpi_\alpha$ is dominant and satisfies $(\lambda - \varpi_\alpha,\alpha) > 0$ (where $(\,,\,)$ is the Killing form), so
\[ (\lambda - \varpi_\alpha, \mu) < (\lambda - \varpi_\alpha, \varpi_\alpha).\]
Moreover, since $\mu$ is a weight of $L(\varpi_\alpha)$, we have $\|\mu\|^2 \leq \|\varpi_\alpha\|^2$ and hence
\[ \|\lambda - \varpi_\alpha + \mu\|^2 = \| \lambda - \varpi_\alpha\|^2 + 2(\lambda - \varpi_\alpha, \mu) + \|\mu\|^2 < \|\lambda -\varpi_\alpha\|^2 + 2(\lambda - \varpi_\alpha, \varpi_\alpha) + \|\varpi_\alpha\|^2 = \|\lambda\|^2.\]
So we cannot have $\lambda + \mu - \varpi_\alpha \in W \lambda$, so \eqref{eq:generation 1} splits as claimed.

Since \eqref{eq:generation 1} splits, we deduce that induced map on global sections
\[ \Gamma(\mc{M} \otimes \mc{O}(-\varpi_\alpha)) \otimes L(\varpi_\alpha) \to \Gamma(\mc{M}) \]
is also surjective. Since $\mc{M}$ is globally generated, this implies surjectivity of the induced morphism
\[ \Gamma(\mc{M} \otimes \mc{O}(-\varpi_\alpha)) \otimes L(\varpi_\alpha) \otimes \mc{O}_\mc{B} \to \mc{M} \]
and hence of the morphism
\[ \Gamma(\mc{M} \otimes \mc{O}(-\varpi_\alpha)) \otimes \mc{O}(\varpi_\alpha) \to \mc{M}\]
through which it factors. Tensoring both sides with $\mc{O}(-\varpi_\alpha)$, we see that
\[ \Gamma(\mc{M} \otimes \mc{O}(-\varpi_\alpha)) \otimes \mc{O}_\mc{B} \to \mc{M} \otimes \mc{O}(-\varpi_\alpha)\]
is surjective, which proves the lemma.
\end{proof}

\begin{proof}[Proof of Theorem \ref{thm:body FPP}]
Let us write $M = \Gamma(\mc{M})$ where $\mc{M} \in \mhm(\mc{D}_\lambda, K)$ is irreducible and $\lambda \in \mf{h}^*_\mb{R}$ is dominant. We write $\mb{O} \subset \mc{B}$ for the $K$-orbit supporting $\mc{M}$.

Choose a simple root $\alpha$ as in the statement of the theorem. Recall the involution $\delta \colon H \to H$ induced by $\theta \colon G \to G$; this permutes the set of simple roots in $\mb{X}^*(H)$. We distinguish two cases: either $\delta(\alpha) = \alpha$ or $\delta(\alpha) \neq \alpha$. If $\delta(\alpha) = \alpha$, we let $\pi \colon \mc{B} \to \mc{P}$ be the projection to the partial flag variety parametrizing maximal parabolic subgroups of $G$ for which $\alpha$ is not a root, and we set $\mu = \varpi_\alpha$. If $\delta(\alpha) \neq \alpha$, we let $\pi \colon \mc{B} \to \mc{P}$ be the projection to the partial flag variety parametrizing next-to-maximal parabolic subgroups for which $\alpha, \delta(\alpha)$ are not roots and set $\mu = \varpi_\alpha + \varpi_{\delta(\alpha)}$. Note that in either case $\mc{O}(\mu)$ descends to a very ample line bundle (which we also denote by $\mc{O}(\mu)$) on $\mc{P}$, and that the action of $\theta$ on $\mc{B}$ descends to an action on $\mc{P}$ such that $\theta^*\mc{O}(\mu) \cong \mc{O}(\mu)$. We fix such an isomorphism so that $\theta$ acts as an involution on $\mc{O}(\mu)$.

Choose a point $x$ in $\OO$. Since $M$ is not cohomologically induced in the weakly good range, by Proposition \ref{prop:coh induction orbit}, the point $\pi(x) \in \mc{P}$ is not $\theta$-fixed, i.e., we have $\pi(x) \neq \pi(\theta(x))$. Since $\mc{O}(\mu)$ is very ample on $\mc{P}$, we may therefore choose a section $s \in \Gamma(\mc{P}, \mc{O}(\mu)) = \Gamma(\mc{B},\mc{O}(\mu))$ such that $s(x) \neq 0$ but $s(\theta(x)) = 0$. Setting $s_{\pm} = (1 \pm \theta)(s)$, we see that $s_{\pm}(x) \neq 0$ and $\theta(s_{\pm}) = \pm s_{\pm}$.

Finally, let us suppose for convenience that the Hodge structure on $\mc{M}$ is chosen so that $F_0\mc{M} \neq 0$ and $F_{-1}\mc{M} = 0$. Since $\mc{M}$ is the intermediate extension of a local system on $\mb{O}$, the fiber $(F_0\mc{M})_x$ of the coherent sheaf $F_0\mc{M}$ at $x$ is non-zero (see, e.g., \cite[\S 5.2]{DavisVilonen22}). Now, since $\mc{M}$ is irreducible and $\Gamma(\mc{M}) \neq 0$, $\mc{M}$ must be globally generated as a $\mc{D}_\lambda$-module and hence as an $\mc{O}_\mc{B}$-module. Since $M$ is Hermitian, we have $\delta \lambda = \lambda$, and hence $\langle \lambda, \check\alpha \rangle = \langle \lambda, \delta(\check\alpha)\rangle > 1$. So $\mc{M} \otimes \mc{O}(-\mu)$, and hence $F_0(\mc{M} \otimes \mc{O}(-\mu)) = F_0\mc{M} \otimes \mc{O}(-\mu)$, is also globally generated by Lemma \ref{lem:generation} and Theorem \ref{thm:global generation}. Hence, there exists a section $v \in \Gamma(F_0\mc{M} \otimes \mc{O}(-\mu))$, which we may as well assume to be an eigenvector for $\theta$, such that $v_x \in (F_0\mc{M} \otimes \mc{O}(-\mu))_x \neq 0$. Thus, the sections
\[ v \otimes s_+, v \otimes s_- \in \Gamma(F_0\mc{M}) = F_0 M\]
are both non-vanishing at $x$ and are eigenvectors for $\theta$ with opposite eigenvalues. In particular, $\theta$ does not act on $F_0 M = \Gr^F_0 M$ with a single sign, so $M$ is not unitary by Theorem \ref{thm:unitarity criterion}.
\end{proof}

\begin{sloppypar} \printbibliography[title={References}] \end{sloppypar}

@article{BCSpherical,
    author = {Barbasch, D. and Ciubotaru, D.},
    title = {Spherical Unitary Principal Series},
    journal = {Pure Appl. Math. Q.},
    year = {2005}
}

@article{Silberger1978,
    author = {Silberger, A.},
    title = {The Langlands quotient theorem for p-adic groups} ,
    journal = {Math. Ann.} ,
    year = {1978}
}

@book {KnappVogan1995,
    AUTHOR = {Knapp, A. and Vogan, D.},
     TITLE = {Cohomological induction and unitary representations},
    SERIES = {Princeton Mathematical Series},
    VOLUME = {45},
 PUBLISHER = {Princeton University Press, Princeton, NJ},
      YEAR = {1995},
}

@incollection{SchmidVilonen2011,
year = {2011},
author = {Schmid, W. and Vilonen, K.},
publisher={International Press},
title={Hodge Theory and Unitary Representations of Reductive Lie Groups},
booktitle={Fronteirs of Mathematical Sciences},
pages={397---420},
}

@inproceedings{Langlands1989,
    author = {Langlands} ,
    title = {On the classification of irreducible representations of real
algebraic groups},
    booktitle = {Representation theory and harmonic analysis on semisimple Lie
groups},
    year = {1989},
    publisher={Amer. Math. Soc.}
}

@article{Parthasarathy,
    author = {Parthasarathy, R.} ,
    title = {The Dirac operator and the discrete series},
    journal = {Ann. of Math.} ,
    year = {1972}
}

@article{HuangPandzic,
    author = {Huang, J.S. and Pandzic, P.} ,
    title = {Dirac cohomology, unitary representations, and a proof of a conjecture of Vogan},
    journal = {J. Amer. Math. Soc.},
    year = {2001}
}

@article{BCTDirac,
    author = {Barbasch, D. and Ciubotaru, D. and Trapa, P.} ,
    title = {Dirac cohomology for graded Hecke algebras},
    journal = {Acta Math.},
    year = {2012}
}

@misc{WongDongFPP,
    title={Vogan's FPP conjecture for complex Lie groups},
    author={Dong, C.P. and Wong, D.},
    year={2024},
    eprint={2407.16512},
    archivePrefix={arXiv},
    primaryClass={math.RT}
}

@article{Wong2024,
    author = {Wong, D.},
    title = {On some conjectures of the unitary dual of U(p,q)},
    journal = {Adv. Math.},
    year = {2024}
}

@article{ALTV,
author={Adams, J. and van Leeuwen, M. and Trapa, P. and Vogan, D.},
title={Unitary representations of real
reductive groups},
journal={Ast\'erisque},
volume={417},
year={2020}}

@article{Vogan1984,
author={Vogan, D.},
title={Unitarizability of certain series of representations},
journal={Ann.\ of Math.},
volume={120},
pages={141--187},
date={1984},
}

@misc{DV,
	author = {Davis, D. and Vilonen, K.},
	title = {Unitary representations of real groups and localization theory for Hodge modules},
	eprint = {2309.13215},
	archivePrefix = {arXiv},
	primaryClass = {math.RT},
	year = {2023}
}

@unpublished{MillerE8talk,
title= {What 4-graviton scattering amplitudes have to say about the unitary dual},
author = {Miller, S.},
year = {2024},
note= {talk given at the Arithmetic Quantum Field Theory Program at Harvard CMSA}
}

@unpublished{VoganFPPtalk,
    author = {Vogan, D.},
    title = {How to compute the unitary dual},
    year={2023},
    note = {talk given at Zhejiang University},
    url={https://math.mit.edu/~dav/zhejiang23HO.pdf}
}

@article{BB1,
	author = {Beilinson, A. and Bernstein, J.},
	title = {Localisation de {$\mathfrak{g}$}-modules},
	journal = {C. R. Acad. Sci.},
	volume = {292},
	number = {1},
	pages = {15--18},
	year = {1981}}

@misc{DavisVilonen22,
	author = {Davis, D. and Vilonen, K.},
	title = {Mixed Hodge modules and real groups},
	eprint = {2202.08797},
	archivePrefix = {arXiv},
	primaryClass = {math.RT},
	year = {2022}
}

\end{document}